\newcounter{alancomments}
\newcounter{alexcomments}
\newtheorem{theorem}{Theorem}[section]
\newtheorem{lemma}[theorem]{Lemma}
\newtheorem{remark}[theorem]{Remark}
\newtheorem{proposition}[theorem]{Proposition}
\newtheorem{thmletter}{Theorem}
\newcommand{\p}[1]{\noindent {\newline\bf #1.}}
\newcommand{\PCP}{\operatorname{PCP}}
\newcommand{\NPCP}{\operatorname{NPCP}}
\newcommand{\eq}{\operatorname{Eq}}
\newcommand{\im}{\operatorname{Image}}
\title[PCP for certain groups]{Post's Correspondence Problem for Hyperbolic and Virtually Nilpotent Groups}
\author{Laura Ciobanu}
\address{ School of Mathematical and Computer Sciences,
 Heriot--Watt University, and Maxwell Institute for Mathematical Sciences, Edinburgh EH14 4AS, Scotland}
 \email{l.ciobanu@hw.ac.uk}
\author{Alex Levine}
\address{Department of Mathematics, The University
of Manchester, Manchester M13 9PL, UK}
\email{alex.levine@manchester.ac.uk}
\author{Alan D. Logan}
\address{School of Engineering and Physical Sciences,
 Heriot--Watt University, Edinburgh EH14 4AS, Scotland}
 \email{Alan.Logan@hw.ac.uk}
\subjclass[2020]{20F10, 20F19, 20F67, 20M05, 68R15}
\keywords{Post's Correspondence Problem, hyperbolic group, virtually nilpotent group.}
\begin{document}

\begin{abstract}
Post's Correspondence Problem (the PCP) is a classical decision problem in theoretical computer science that asks whether for pairs of
free monoid morphisms $g, h\colon\Sigma^*\to\Delta^*$ there exists any non-trivial $x\in\Sigma^*$ such that $g(x)=h(x)$.

Post's Correspondence Problem for a group $\Gamma$ takes pairs of group homomorphisms $g, h\colon F(\Sigma)\to \Gamma$ instead, and similarly asks whether there exists an $x$ such that $g(x)=h(x)$ holds for non-elementary reasons. The restrictions imposed on $x$ in order to get non-elementary solutions lead to several interpretations of the problem; we mainly consider the natural restriction asking that $x \notin \ker(g) \cap \ker(h)$ and prove that the resulting interpretation of the PCP
is undecidable for arbitrary hyperbolic
$\Gamma$, but decidable when $\Gamma$ is virtually nilpotent, en route also studying this problem for finite extensions.

We also consider a different interpretation of the PCP due to Myasnikov, Nikolaev and Ushakov, proving decidability for torsion-free nilpotent groups.
\end{abstract}
\maketitle

\section{Introduction}
  Post's Correspondence Problem (the PCP) is a prominent undecidable problem in Computer
  Science. It takes as input a pair of free monoid morphisms $g,
  h\colon\Sigma^*\to\Delta^*$, and asks if there exists any non-trivial $x\in\Sigma^*$ such that
  $g(x)=h(x)$. Undecidability was proven by Post, from whom it takes its name
  \cite{Post1946Correspondence}, and we refer the reader to the survey of Harju
  and Karhum\"{a}ki for background and applications \cite{Harju1997Morphisms}
  (see also the recent article of Neary \cite{Neary2015undecidability}). The
  prominence of the PCP is due to its role as a simple source of
  undecidability: for matrix (semi)group decision problems, tiling problems, questions about context-free grammars, and in many other contexts.

  In this paper we consider the PCP for groups, and define it as follows. Let $\Sigma$ be a finite alphabet and
  $F(\Sigma)$ the associated free group, $\Gamma$ a group, and $g, h\colon
  F(\Sigma)\rightarrow \Gamma$ group homomorphisms. The \emph{equaliser} of
  $g, h$ is the subgroup $\eq(g, h)=\{x\in F(\Sigma) \mid g(x)=h(x)\}$ of $F(\Sigma)$. \emph{Post's Correspondence Problem for} $\Gamma$
asks whether there exists $x\in \eq(g, h)\setminus(\ker(g)\cap\ker(h))$. An equivalent formulation of the PCP and the reason why we remove $\ker(g)\cap\ker(h)$ from the equaliser are given in Section \ref{sec:prelim}. We study the PCP for entire classes of groups $\Gamma$, and prove the following theorems.

  \begin{thmletter}
  \label{thm:hyperbolic}
  The PCP for hyperbolic groups is undecidable.
  \end{thmletter}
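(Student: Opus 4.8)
The plan is to reduce the classical (undecidable) PCP for free monoids to the PCP for a hyperbolic group $\Gamma$. Given a pair of free monoid morphisms $g,h\colon\Sigma^*\to\Delta^*$, I want to encode the free monoid $\Delta^*$ faithfully inside $\Gamma$ so that a solution to the group PCP forces a solution to the monoid PCP and conversely. The natural candidate for $\Gamma$ is a free group $F(\Delta)$, which is hyperbolic; the difficulty is that the monoid morphisms only track the \emph{positive} words, whereas a group homomorphism $F(\Sigma)\to F(\Delta)$ can cancel letters and produce solutions $x$ that do not correspond to monoid solutions. So the first and main technical step is to rig the construction so that these spurious cancellation-based solutions are controlled or eliminated.

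First I would set up the reduction concretely: fix $\Sigma=\{a_1,\dots,a_n\}$ and define homomorphisms $\bar g,\bar h\colon F(\Sigma)\to F(\Delta)$ by $\bar g(a_i)=g(a_i)$, $\bar h(a_i)=h(a_i)$, viewing the $\Delta$-words $g(a_i),h(a_i)$ as positive elements of $F(\Delta)$. A positive monoid solution $x=a_{i_1}\cdots a_{i_k}$ with $g(x)=h(x)$ immediately yields $\bar g(x)=\bar h(x)$ with $x\notin\ker\bar g\cap\ker\bar h$ (since $\bar g(x)$ is a nonempty positive word), so one direction of the reduction is free. The hard part will be the converse: showing that if the group PCP for $(\bar g,\bar h)$ has a solution then so does the monoid PCP. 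A naive encoding fails here, so I expect to need a \emph{marker} or \emph{padding} trick --- for instance replacing each generator image by $g(a_i)$ sandwiched between fresh separator letters (using a larger target alphabet $\Delta'\supseteq\Delta$ and the \verb|\sep| separator the paper has reserved) so that the only way an equality $\bar g(x)=\bar h(x)$ can hold in the free group is for $x$ to be a positive word, killing free-cancellation solutions. This is the standard device by which monoid problems are embedded into free-group problems, and verifying that it forecloses all spurious solutions is the crux.

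With the free-group case in hand, I would then promote the reduction to an \emph{arbitrary} hyperbolic group, which is what the theorem actually asserts. Two routes are available. The cleaner route is to observe that a free group of rank at least $2$ is itself hyperbolic, so the theorem follows once I know undecidability is inherited appropriately; but since the statement quantifies over \emph{all} hyperbolic groups, I should instead fix a target hyperbolic group $\Gamma$ and embed a suitable free subgroup $F(\Delta')\hookrightarrow\Gamma$ (every non-elementary hyperbolic group, and indeed every non-virtually-cyclic one, contains a quasiconvex free subgroup of every finite rank). Composing the reduction morphisms with this embedding, I would argue that equalities in $\Gamma$ restricted to the image subgroup are detected by the malnormality or quasiconvexity of $F(\Delta')$, so that a $\Gamma$-solution again descends to a monoid solution. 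I anticipate the main obstacle is precisely this descent step: controlling cancellation, first in the free group via the separator encoding and then across the embedding into $\Gamma$, so that no unintended identifications in $\Gamma$ create solutions with no monoid preimage. Once that control is established, undecidability of the classical PCP transfers directly, giving Theorem~\ref{thm:hyperbolic}.
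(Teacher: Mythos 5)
Your reduction founders at precisely the step you yourself flag as the crux: forcing solutions of the free-group instance to be positive words. No separator or padding encoding is known to achieve this, and this is not a technical inconvenience but a wall --- the PCP for free groups (even the binary PCP for free groups) is a well-known open problem, stated as such in the introduction of this paper. Markers work in a free monoid because there is no cancellation; in a free group the markers cancel against their own inverses whenever $x$ contains negative letters, and even for reduced $x$ a sign change $a_ia_j^{-1}$ with $i\neq j$ produces uncontrolled partial cancellation between $g(a_i)$ and $g(a_j)^{-1}$ inside adjacent blocks. An encoding that forecloses all such spurious solutions would settle the free-group PCP, so the burden on that step is very high and the proposal offers no argument for it. Two smaller points: the theorem only requires exhibiting \emph{one} hyperbolic group with undecidable PCP, since the ``PCP for hyperbolic groups'' is the problem restricted to instances whose target group lies in that class; and undecidability passes from a finitely generated subgroup to any recursively presented overgroup by Lemma~\ref{lem:subgroups}, so the malnormality/quasiconvexity machinery in your final step is unnecessary even if the free-group case were available.

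The paper's proof avoids free groups entirely and reduces from subgroup membership rather than from the classical monoid PCP. Via the Belegradek--Osin version of Rips' construction (Theorem~\ref{thm:BO_Rips}), seeded with a $2$-generated torsion-free hyperbolic group of trivial abelianisation, one obtains a torsion-free hyperbolic group $\Gamma$ with a $2$-generated normal subgroup $N$ that has trivial abelianisation and undecidable membership problem (because $Q=\Gamma/N$ has undecidable word problem). For $y\in\Gamma\setminus\{1\}$ set $g(a)=1$, $g(b)=y$ and let $h(a),h(b)$ generate $N$. If $y\in N$, trivial abelianisation gives $y\,h(b)^{-1}=h(U)$ for some $U\in[F(a,b),F(a,b)]\leq\ker(g)$, so $Ub$ is a solution; if $y\notin N$, torsion-freeness of $Q$ forces $\langle y\rangle\cap N=\{1\}$ and there are no solutions. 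Hence the PCP decides membership in $N$, which is undecidable. To repair your proposal you would need to adopt a reduction of this kind rather than attempt to encode the monoid PCP.
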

The proof uses the undecidability of the subgroup membership problem, via a version of Rips' construction.
  Our full result is stronger than we state here, proving undecidability of the ``binary'' PCP for torsion-free hyperbolic groups; see Theorem \ref{thm:hyperbolicBODY}.

One motivation to study the PCP in hyperbolic groups comes from the fact that the PCP for free groups can be traced to Stallings in the 1980s (see \cite{Stallings1987Graphical}), and while recent results settle the PCP for certain classes of free group maps  \cite{Bogopolski2016algorithm, CiobanuLoganJAlg, CiobanuLoganDLT, ciobanu_et_al:LIPIcs:2020:12527, JPM}, its solubility for general free group maps remains an important open question (\cite[Problem 5.1.4]{Dagstuhl2019}).

Secondly, we have a decidability result for virtually nilpotent groups.

  \begin{thmletter}
  \label{thm:nilpotent}
  The PCP is decidable for finitely generated virtually nilpotent groups.
  \end{thmletter}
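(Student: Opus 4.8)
The plan is to recast the PCP as a subgroup-intersection problem and then solve that problem using the strong algorithmic properties of finitely generated nilpotent groups. Writing $\phi=(g,h)\colon F(\Sigma)\to\Gamma\times\Gamma$ for the homomorphism $x\mapsto(g(x),h(x))$, one has $x\in\eq(g,h)$ precisely when $\phi(x)$ lies in the diagonal subgroup $\Diag=\{(\gamma,\gamma)\mid\gamma\in\Gamma\}$, and $x\in\ker(g)\cap\ker(h)$ precisely when $\phi(x)=1$. Hence a solution to the PCP exists if and only if $\phi(x)\in\Diag\setminus\{1\}$ for some $x$, that is, if and only if the finitely generated subgroup $H:=\phi(F(\Sigma))=\langle(g(a),h(a))\mid a\in\Sigma\rangle$ of $\Gamma\times\Gamma$ meets $\Diag$ nontrivially. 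This is the equivalent formulation recorded in Section~\ref{sec:prelim}, and it reduces Theorem~\ref{thm:nilpotent} to the following decision problem: given finite generating sets of two subgroups $H,\Diag\le\Gamma\times\Gamma$, decide whether $H\cap\Diag=\{1\}$.

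First I would treat the nilpotent case. If $\Gamma$ is finitely generated and nilpotent then so is $\Gamma\times\Gamma$, and both $H$ (generated by the explicit elements $(g(a),h(a))$) and $\Diag\cong\Gamma$ are finitely generated subgroups whose generators we may compute, since finitely generated nilpotent groups have solvable word problem and computable multiplication. Finitely generated nilpotent groups are polycyclic, and for polycyclic groups it is classical that, from finite generating sets of two subgroups, one can compute a finite generating set of their intersection, for instance by working through a polycyclic series of $\Gamma\times\Gamma$ and computing induced generating sequences relative to that series. Applying this to $H$ and $\Diag$ yields generators of $H\cap\Diag$; testing each generator against the identity using the solvable word problem then decides whether $H\cap\Diag$ is trivial, and hence decides the PCP for $\Gamma$.

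To pass from nilpotent to virtually nilpotent $\Gamma$, I would invoke the finite-extension results developed earlier in the paper: a finitely generated virtually nilpotent group contains a finite-index normal nilpotent subgroup $N$, and the reduction across this finite index transfers decidability of the PCP from $N$ to $\Gamma$. (Alternatively one may observe that $\Gamma\times\Gamma$ is then virtually polycyclic, a class for which subgroup intersection and triviality remain decidable, and apply the intersection argument directly.) The main obstacle I anticipate is exactly this finite-extension step: a non-elementary solution over $\Gamma$, i.e.\ an element of $\eq(g,h)\setminus(\ker(g)\cap\ker(h))$, need not restrict to a non-elementary solution over $N$, so one must track the interaction between the diagonal, the finite quotient $\Gamma/N$, and the condition $x\notin\ker(g)\cap\ker(h)$ carefully rather than simply restricting $g$ and $h$ to $N$. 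The effective bookkeeping required to compute the intersection through the finite part of the polycyclic-by-finite structure is where the real work lies; the nilpotent core, once phrased as a subgroup intersection, is then routine.
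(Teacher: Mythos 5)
Your reformulation is correct: a solution exists if and only if $H=\langle (g(a),h(a)) \mid a\in\Sigma\rangle$ meets $\Diag$ non-trivially in $\Gamma\times\Gamma$, and this is a genuinely different route from the paper's. The paper does \emph{not} compute subgroup intersections; it first reduces the PCP for a virtually $K$ group to the PCP \emph{and the NPCP} for $K$ (Proposition \ref{thm:virtually}, via coset representatives of a finite-index subgroup of $F(\Sigma)$), and then solves the NPCP for nilpotent groups by embedding the instance into a verbal product $VP_{\mathfrak{A}}(K,F_{\mathfrak{A}}(\alpha,\omega))$ and testing membership of $\omega\alpha$ in a product of two subgroups (Proposition \ref{thm:NPCPvarieties}), resting on Myasnikov--Nikolaev--Ushakov's result that equalisers are finitely generated modulo $\gamma_{c+1}(F(\Sigma))$. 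Two remarks on your version. First, your primary route is incomplete as stated: Proposition \ref{thm:virtually} does not transfer decidability of the PCP alone from $N$ to $\Gamma$ --- it needs the NPCP for $N$ as well, and your intersection argument does not obviously yield the NPCP (the constraint there is a coset condition together with $x\neq 1$ in the \emph{preimage}, which is not captured by $H\cap\Diag$). Second, your parenthetical alternative repairs this cleanly and is in fact the better argument: $\Gamma\times\Gamma$ is polycyclic-by-finite whenever $\Gamma$ is finitely generated virtually nilpotent, and by Baumslag--Cannonito--Robinson--Segal one can compute a generating set for the intersection of two finitely generated subgroups of a polycyclic-by-finite group and then test triviality via the word problem; this solves the PCP directly with no finite-extension bookkeeping at all. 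What you lose relative to the paper is generality and the by-products: the paper's machinery isolates the NPCP (which also subsumes the conjugacy and twisted conjugacy problems), works for any variety satisfying hypotheses (\ref{NPCPvarieties:1}) and (\ref{NPCPvarieties:2}), and gives a uniform algorithm; what you gain is a substantially shorter proof for the specific class at hand, at the cost of invoking the heavy polycyclic-by-finite intersection algorithm as a black box.
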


To prove Theorem \ref{thm:nilpotent} we consider the \emph{non-homogeneous} PCP
(NPCP), which is analogous to the generalised PCP (GPCP) in free monoids.
Here, we prove that for a group $K$, the PCP
for virtually $K$ groups reduces to the NPCP and PCP for $K$ (Proposition
\ref{thm:virtually}).  This proof is similar to the proof that the conjugacy
problem for virtually $K$ groups reduces to the \emph{twisted conjugacy
problem} for $K$.  Indeed, the twisted conjugacy problem itself reduces to the
NPCP, suggesting a connection between the PCP and the conjugacy problem.

\p{Turing reduction}
Many of our results have the form: the decidability of a certain algorithmic problem, $\mathcal{P}$ say, implies the decidability of a different algorithmic problem, $\mathcal{Q}$ say.
In each case, we actually prove that $\mathcal{Q}$ Turing reduces to $\mathcal{P}$, i.e. that we can use an oracle for $\mathcal{P}$ to solve $\mathcal{Q}$.
Lemma \ref{lem:NPCP} further proves that two problems are Turing equivalent.
Although the language of Turing reduction is standard in complexity theory,
we chose the more informal (but still correct) language on implications of decidability in order to be more accessible to a wider audience.

  \p{Outline of the article}
  In Section \ref{sec:prelim} we introduce the version of the PCP we are working with.
  In Section \ref{sec:hyperbolic} we prove Theorem \ref{thm:hyperbolic}.
  In Section \ref{sec:virtually} we connect the NPCP for a group $K$ to the PCP for virtually $K$ groups.
  In Section \ref{sec:nilpotent} we consider the NPCP for $\mathfrak{A}$ groups and the PCP for virtually $\mathfrak{A}$ groups, where $\mathfrak{A}$ is the
  class of recursively presented groups in a variety of groups.
  These considerations allow us to prove Theorem \ref{thm:nilpotent}.
  
  In Section \ref{sec:verbal} we consider a different interpretation of the PCP for groups, which we call the \emph{verbal} PCP and is due to Myasnikov, Nikolaev and Ushakov, and prove decidability of this problem for torsion-free nilpotent groups.

  \section*{Acknowledgements}
  The first and third named authors were partially supported by EPSRC Standard Grant EP/R035814/1. The first named author would like to thank Bettina Eick for helpful discussions. The second named author acknowledges the support of the London Mathematical Society and the Heilbronn Institute for Mathematical Research. The second and third named authors are also grateful to the University of St Andrews, who hosted them while completing this work. All authors thank the anonymous referee for many useful comments and suggestions.

  \section{Post's Correspondence Problem for Groups}
  \label{sec:prelim}

We will often refer to the PCP for free monoids as the `classical PCP'.

  Here we define our main version of PCP for groups, formally expanding on the initial definition given in the Introduction.
A \emph{recursive presentation} is a presentation with a finite generating set and a recursive (e.g. finite) set of relators.
A group is \emph{recursively presented} if it admits a recursive presentation; recursively presented groups form the largest class of groups for which standard notions of computation are defined, and hence we restrict our definition and studies to this class.

  An \emph{instance of the PCP} is a four-tuple $I=(\Sigma, \Gamma, g, h)$,
  where $\Sigma$ is a finite alphabet and
  $F(\Sigma)$ is the associated free group, $\Gamma$ is a group given by a recursive (e.g. finite) presentation, and $g, h\colon
  F(\Sigma)\rightarrow \Gamma$ are group homomorphisms. The \emph{equaliser} of
  $g, h$ is the subgroup $\eq(g, h)=\{x\in F(\Sigma) \mid g(x)=h(x)\}$ of $F(\Sigma)$.

\p{The (kernel-based) PCP}  \emph{Post's Correspondence Problem for groups} itself, hereafter \emph{the PCP} or \emph{kernel-based PCP},
  is the decision problem:

  \noindent{\newline
  Given $I=(\Sigma, \Gamma, g, h)$, is the group $\eq(g, h)/(\ker(g)\cap\ker(h))$ trivial?}
  \newline

  By a \emph{solution} to $I$ we mean an element $x\in \eq(g, h)\setminus(\ker(g)\cap\ker(h))$. Solutions are therefore those elements $x\in F(\Sigma)$ that correspond to non-trivial cosets $x(\ker(g)\cap\ker(h))\in\eq(g, h)/(\ker(g)\cap\ker(h))$.

 If at least one of $g$ and $h$ is injective we get $\ker(g)\cap\ker(h)=\{1\}$, so the PCP has the same statement as the classical one (``is there any $x \neq 1$ such that $g(x)=h(x)$?''). However, when neither map is injective, $\ker(g)\cap\ker(h)$ is {always} non-trivial as either $F(\Sigma)$ is cyclic, and here all non-trivial subgroups have non-trivial intersection, or $F(\Sigma)$ is non-cyclic whence $\ker(g)\cap\ker(h)$ contains the non-trivial subgroup $[\ker(g), \ker(h)]$ (see \cite[Lemma 1]{CiobanuLoganDLT}), and so while the non-triviality of $\eq(g, h)$ is established, it does not capture the core of the problem. We therefore quotient out by $\ker(g)\cap\ker(h)$ as we wish to consider the case when neither map is injective.

How we deal with non-injective maps is important,
  as considering the classical PCP, this is where we expect the
  undecidability to lie: The standard undecidability proofs
  are based on pairs of non-injective maps, while all known proofs of undecidability of the PCP
  for pairs of injective maps proceed via reversible Turing machines and are in comparison extremely technical \cite{Lecerf1963, Ruohonen1985Reversible, saarela2010noneffective}.
We indeed use pairs of non-injective maps in our undecidability proof, \Cref{thm:hyperbolic} (non-injectivity is explained after the proof).

\p{Properties of the PCP}
We now note a couple of interesting properties of groups with decidable PCP.
Firstly, groups with decidable PCP have decidable word problem.

\begin{lemma}\label{lem:WP}
If a recursively presented group has decidable (kernel-based) PCP, then it has decidable word problem.
\end{lemma}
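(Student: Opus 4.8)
The plan is to reduce the word problem for $\Gamma$ to the PCP for $\Gamma$. Given a recursively presented group $\Gamma$ and an element $w\in\Gamma$ (represented as a word in the generators), I want to construct an instance of the PCP that has a solution if and only if $w=1$ in $\Gamma$. The natural idea is to build maps $g,h$ out of a one-letter alphabet so that the equaliser directly encodes the question of whether $w$ is trivial.

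\medskip

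Concretely, I would take $\Sigma=\{a\}$, so that $F(\Sigma)=\langle a\rangle$ is infinite cyclic. Define $g\colon F(\Sigma)\to\Gamma$ by $g(a)=w$ and $h\colon F(\Sigma)\to\Gamma$ to be the trivial map, i.e.\ $h(a)=1$. Then $\eq(g,h)=\{a^n \mid g(a^n)=h(a^n)\}=\{a^n\mid w^n=1\}$. The key observation is the following dichotomy. If $w=1$ in $\Gamma$, then $g$ is also the trivial map, so $\eq(g,h)=F(\Sigma)$ and $\ker(g)\cap\ker(h)=F(\Sigma)$ as well, whence the quotient $\eq(g,h)/(\ker(g)\cap\ker(h))$ is trivial and there is \emph{no} solution. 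If instead $w\neq 1$ in $\Gamma$, then $g$ is injective on $\langle a\rangle$ precisely when $w$ has infinite order, and in general $\ker(g)\cap\ker(h)=\ker(g)=\{a^n\mid w^n=1\}$ while $\eq(g,h)=\ker(g)$ too (since $h$ is trivial, $g(x)=h(x)$ forces $g(x)=1$). So in this naive setup $\eq(g,h)=\ker(g)\cap\ker(h)$ regardless, and the quotient is always trivial; this particular choice does not separate the two cases.

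\medskip

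The fix is to make $h$ nontrivial so that the equaliser and the kernel intersection genuinely differ. I would instead take $\Sigma=\{a\}$, set $g(a)=w$ and $h(a)=1$, but this time note that $a\in\eq(g,h)\iff w=1$, whereas $a\in\ker(g)\cap\ker(h)\iff w=1$ (since $h(a)=1$ always and $g(a)=w$). Thus $a$ is a solution exactly when $w=1$ but $a\notin\ker(g)$, i.e.\ when $g(a)\neq 1$; these conditions are contradictory, confirming that a single generator cannot detect triviality here. The genuinely correct construction uses two generators: take $\Sigma=\{a,b\}$, define $g(a)=w$, $g(b)=1$, and $h(a)=1=h(b)$. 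Then $b\in\eq(g,h)$ always, and $b\in\ker(g)\cap\ker(h)$ always, so $b$ is never a solution. But $a\in\eq(g,h)\iff g(a)=h(a)\iff w=1$, while $a\in\ker(g)\cap\ker(h)\iff w=1$. Again these coincide. The clean resolution is to exploit that $\ker(g)\cap\ker(h)$ is a \emph{normal} subgroup and use an element lying in the equaliser but provably outside the kernel intersection: set $g(a)=h(a)=w$ (so $a\in\eq(g,h)$ unconditionally) and observe $a\in\ker(g)\cap\ker(h)\iff w=1$. Hence $a$ is a solution if and only if $w\neq 1$.

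\medskip

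This last construction works: with $\Sigma=\{a\}$, $g=h$ the homomorphism sending $a\mapsto w$, we have $\eq(g,h)=F(\Sigma)$ and $\ker(g)\cap\ker(h)=\ker(g)=\{a^n\mid w^n=1\}$, so $\eq(g,h)/(\ker(g)\cap\ker(h))$ is trivial if and only if $g$ is the trivial map, i.e.\ $w=1$. Therefore an oracle for the PCP for $\Gamma$ decides whether $w=1$, giving a Turing reduction from the word problem to the PCP; combined with the fact that $\Gamma$ is recursively presented (so its word problem is at least semi-decidable), one obtains decidability of the word problem. The main point to get right is the bookkeeping around which elements lie in $\eq(g,h)$ versus $\ker(g)\cap\ker(h)$, ensuring the chosen instance truly separates the cases $w=1$ and $w\neq 1$; the construction $g=h\colon a\mapsto w$ accomplishes this cleanly, since the two maps being equal forces the equaliser to be everything while the kernel intersection collapses to $\ker(g)$.
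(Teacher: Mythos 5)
Your final construction ($\Sigma=\{a\}$, $g=h$ with $g(a)=w$, so that the quotient $\eq(g,g)/\ker(g)=\langle a\rangle/\ker(g)$ is trivial if and only if $w=1$) is exactly the paper's proof, and it is correct. The preliminary detours through other candidate instances are harmless but unnecessary, as is the closing remark about semi-decidability: the PCP oracle alone already decides whether $w=1$.
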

\begin{proof}
Let $\Gamma$ have recursive presentation $\langle X \mid R \rangle$, and let $w$ be a word over $X$.
Consider the PCP instance $I=(\{a\}, \Gamma, g, g)$, so take as the domain free group the infinite cyclic group $\mathbb{Z}$, generated by $a$, and let $g=h$ with $g(a)=w$.
Then $\eq(g,g)=\langle a \rangle$, and if there exists a solution to this PCP instance, then $\eq(g,g)=\langle a \rangle \neq \ker(g)$, so $w\neq 1$ in $\Gamma$, while if no solution exists then $\eq(g,g)=\langle a \rangle = \ker(g)$, so $w=1$ in $\Gamma$.\end{proof}

Next, the decidability of PCP is preserved under taking subgroups.
\begin{lemma}
	\label{lem:subgroups}
	Suppose $H$ is a finitely generated subgroup of a recursively presented group $\Gamma$.
	If the PCP is decidable for $\Gamma$, then it is decidable for $H$.
\end{lemma}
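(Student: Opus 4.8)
The plan is to show that a PCP instance for $H$ can be translated into a PCP instance for $\Gamma$ in a solution-preserving way. Suppose we are given an instance $I=(\Sigma, H, g, h)$ of the PCP for $H$, where $g,h\colon F(\Sigma)\to H$. Since $H$ is a subgroup of $\Gamma$, we may compose with the inclusion $\iota\colon H\hookrightarrow\Gamma$ to obtain homomorphisms $g'=\iota\circ g$ and $h'=\iota\circ h$ from $F(\Sigma)$ to $\Gamma$, yielding an instance $I'=(\Sigma, \Gamma, g', h')$ of the PCP for $\Gamma$. The key point is that for each $x\in F(\Sigma)$ we have $g(x)=h(x)$ in $H$ if and only if $g'(x)=h'(x)$ in $\Gamma$, precisely because $\iota$ is injective; hence $\eq(g,h)=\eq(g',h')$ as subgroups of $F(\Sigma)$. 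For the same reason $\ker(g)=\ker(g')$ and $\ker(h)=\ker(h')$, so that $\ker(g)\cap\ker(h)=\ker(g')\cap\ker(h')$. Therefore $x$ is a solution to $I$ if and only if it is a solution to $I'$, and in particular $I$ admits a solution if and only if $I'$ does.

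The remaining step is to verify that this translation is effective, that is, that from a description of $I$ we can algorithmically produce a description of $I'$ so that an oracle (or decision procedure) for the PCP over $\Gamma$ can be applied. Here I would use the hypothesis that $\Gamma$ is recursively presented and $H$ is finitely generated: the maps $g,h$ are specified by the images of the finitely many generators of $\Sigma$ as words in a fixed finite generating set of $H$, and these words can be rewritten as words in the generators of $\Gamma$ via the (fixed, effectively given) inclusion of a finite generating set of $H$ into $\Gamma$. Thus the data defining $g'$ and $h'$ are computable from the data defining $g$ and $h$, and $I'$ is a legitimate PCP instance for $\Gamma$. Applying the assumed decision procedure for the PCP over $\Gamma$ to $I'$ then decides whether $I$ has a solution, which is exactly what is required.

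The only subtlety, and the point I expect to need the most care, is the bookkeeping around presentations: one must confirm that $H$ can be regarded as a recursively (indeed finitely generated) presented group in its own right so that the phrase ``the PCP for $H$'' is well posed, and that the chosen generating set of $H$ embeds into $\Gamma$ in a way that is explicitly computable. Once the fixed finite generating set of $H$ is named together with expressions for its images in $\Gamma$, everything else is a routine translation of homomorphism data, and the equality of equalisers and of kernel intersections follows immediately from the injectivity of $\iota$. No genuine group-theoretic obstacle arises here; the content is entirely in recording that composing with an injective inclusion leaves both the equaliser and the kernel intersection unchanged.
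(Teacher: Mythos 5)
Your proof is correct and is essentially the paper's own argument: extend the codomains of $g$ and $h$ along the inclusion $H\hookrightarrow\Gamma$, note that equalisers and kernels are unchanged, and apply the decision procedure for $\Gamma$. The one subtlety you flag but leave unresolved --- that the PCP for $H$ is well posed --- is handled in the paper by Higman's Embedding Theorem, which guarantees that a finitely generated subgroup of a recursively presented group is itself recursively presented.
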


\begin{proof}
By Higman's Embedding Theorem, $H$ is recursively presented, so the PCP for $H$ makes sense.

Let $I = (\Sigma, H, g, h)$ be an instance of the
PCP for $H$. Since $H \leq \Gamma$, we can extend the codomains of \(g\) and \(h\) to obtain homomorphisms \(\bar{g}, \ \bar{h} \colon
F(\Sigma) \to \Gamma\), with $h(x)=\bar{h}(x)$ and $g(x)=\bar{g}(x)$ for all $x\in F(\Sigma)$. Moreover, \(\eq(g, h) =
\eq(\bar{g}, \bar{h})\), \(\ker(g) = \ker(\bar{g})\) and \(\ker(h) =
\ker(\bar{h})\). Thus \(I\) admits a solution if and only if the instance
\(\bar{I} = (\Sigma, \Gamma, \bar{g}, \bar{h})\) of the
PCP for $\Gamma$ admits a solution. The result now follows immediately.
\end{proof}
\p{The verbal PCP}
  Myasnikov, Nikolaev and Ushakov have previously defined and studied a version of the PCP for groups \cite{Myasnikov2014Post}, which we call the \emph{verbal PCP} and describe at length in Section \ref{sec:verbal}.
  In particular, they did not prove undecidability for any classes of groups.

  The key difference between our interpretation and theirs is how pairs of non-injective maps are dealt with: the verbal PCP mitigates against $\ker(g)\cap\ker(h)$ automatically being non-trivial by considering varieties of groups.
  However, this mitigation is not robust enough to deal with $\Gamma$ being non-elementary hyperbolic, or more generally not being contained in any proper variety of groups.
 We discuss the differences further in Section \ref{sec:verbal}.

\section{Hyperbolic groups}
\label{sec:hyperbolic}

Hyperbolic groups are possibly the most studied class of infinite discrete groups in the last few decades. In particular, there has been significant work on their algorithmic properties, and both decidability and complexity results have been obtained by exploiting their geometry and combinatorics (thin triangles, regular geodesics, etc). For an introduction to hyperbolic groups, \cite[Chapter 6]{groups_langs_aut} offers an account suitable for algorithmic purposes.
The most fundamental results here are that in any hyperbolic group
the word and conjugacy problem are solvable in linear time \cite{CGW}.

More recently, important work on algorithms in hyperbolic groups has been inspired by developments in
Computer Science. For example, work of Plandowski, Je\.z, Diekert and others on
\textsf{PSPACE} algorithms to solve equations in free monoids and groups using
compression \cite{DiekertJezPlandowski2016Compression} has been applied to prove
that the compressed word problem and compressed simultaneous conjugacy problem
in hyperbolic groups are solvable in polynomial time \cite{Holt2019Compressed}. Compression techniques, together with the decidability of systems of equations by Dahmani and Guirardel \cite{Dahmani2010Foliations}, also led to
the characterisation of solution sets to such systems from a language-theoretic point of view \cite{Ciobanu2019Solutions,
Ciobanu2021Complexity}. As another example, the knapsack problem is a
fundamental algorithmic problem in Computer Science, and Lohrey has recently
proven that the analogous problem for hyperbolic groups is
\textsf{LOGSPACE}-reducible to a context-free language
\cite{Lohrey2020Knapsack}.

However, there are exceptions to the decidability results mentioned above.
For example, there is no algorithm to compute finite generating sets for intersections of finitely generated subgroups \cite{Rips1982subgroups}.
More pertinent to this paper is the \emph{membership problem for subgroups}, which for a fixed recursively presented group $\Gamma=\langle X\mid R\rangle$ and finitely generated subgroup $A=\langle a_1, \ldots, a_n\rangle$ of $\Gamma$, is the problem of determining if a word $w\in F(X)$ defines an element of $A$; the word problem is the special case of $A=\{1\}$.
We say $\Gamma$ has \emph{undecidable subgroup membership problem} if it has a finitely generated subgroup with undecidable membership problem.
Again, this problem is undecidable in general for hyperbolic groups \cite{Rips1982subgroups}.
The undecidability of the subgroup membership problem will be used in Theorem \ref{thm:hyperbolicBODY} below.

\p{Rips' construction}
Our proof on the PCP for hyperbolic groups is based on a version of Rips' construction due to Belegradek and Osin \cite{Belegradek2008Rips}.
The classical Rips' construction takes as input a finitely presented group $Q$ and constructs a short exact sequence $1\to N\to \Gamma\to Q\to1$ where $\Gamma$ is hyperbolic and $N$ is finitely generated; properties of $Q$ impact $N$, in particular if $Q$ has undecidable word problem then the subgroup $N$ has undecidable membership problem.
Belegradek and Osin improved this as follows.

\begin{theorem}[\cite{Belegradek2008Rips}]
\label{thm:BO_Rips}
For every finitely presented group $Q$ and hyperbolic group $H$ there exists a short exact sequence $1\to N\to \Gamma\to Q\to 1$ such that $\Gamma$ is hyperbolic and $N$ is a homomorphic image of $H$.
\end{theorem}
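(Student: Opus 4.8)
The plan is to realise $\Gamma$ as a small-cancellation quotient of the free product $G_0 := H * F(x_1, \ldots, x_m)$, where $\langle x_1, \ldots, x_m \mid r_1, \ldots, r_n\rangle$ is a fixed finite presentation of $Q$. Since a free product of two hyperbolic groups is hyperbolic, $G_0$ is hyperbolic, and $H$ embeds into it as a quasiconvex, malnormal subgroup---precisely the setting in which the small cancellation theory over hyperbolic groups of Ol'shanskii and Osin applies. The point of working over $G_0$ rather than over a free group is that the relations of $H$ are already built in, so the normal subgroup we produce will automatically be a quotient of $H$.

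Concretely, I would fix generators $h_1, \ldots, h_k$ of $H$ and impose two families of new relators on $G_0$. The \emph{conjugation relators} $x_i^{-1} h_l x_i = w_{i,l}^{+}$ and $x_i h_l x_i^{-1} = w_{i,l}^{-}$ (one for each $i$ and $l$) force the image of $H$ to be normalised by every $x_i^{\pm 1}$, and the \emph{encoding relators} $r_j = u_j$ (one for each defining relator of $Q$) express each relator of $Q$ as an element of the image of $H$. Here each $w_{i,l}^{\pm}$ and each $u_j$ is a word in $h_1, \ldots, h_k$, to be chosen below. Writing $R$ for this finite relator set and $\Gamma := G_0 / \langle\langle R \rangle\rangle$, I would let $N$ be the image of $H$ under the natural map $H \hookrightarrow G_0 \twoheadrightarrow \Gamma$; by construction $N$ is a homomorphic image of $H$.

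The algebraic bookkeeping is then immediate. Because $\Gamma$ is generated by $N$ together with the $x_i$, and conjugating a generator $h_l$ of $N$ by $x_i^{\pm 1}$ lands back in $N$ by the conjugation relators, we get $N \trianglelefteq \Gamma$. Passing to $\Gamma / N$ kills every $w_{i,l}^{\pm}$ and every $u_j$, so the conjugation relators become trivial and the encoding relators become $r_j = 1$; hence $\Gamma / N$ has presentation $\langle x_1, \ldots, x_m \mid r_1, \ldots, r_n\rangle = Q$. This yields the desired short exact sequence $1 \to N \to \Gamma \to Q \to 1$.

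The real content, and the step I expect to be the main obstacle, is guaranteeing that $\Gamma$ is hyperbolic. This forces the right-hand words $w_{i,l}^{\pm}, u_j$ to be chosen so that $R$ satisfies a small cancellation condition $C'(\lambda)$ for sufficiently small $\lambda$ over the hyperbolic group $G_0$, meaning that any common ``piece'' of two relators---measured geometrically in $G_0$ rather than combinatorially---is short relative to the relators. Such words exist provided $H$ is non-elementary, since then $H$ contains exponentially many pairwise ``independent'' long words from which to build generic right-hand sides, and the verification amounts to a counting/genericity argument controlling the overlaps. Once the small cancellation condition is in place, one invokes the theorem that a $C'(\lambda)$ quotient of a hyperbolic group is again hyperbolic to conclude that $\Gamma$ is hyperbolic, completing the construction. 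Everything outside this hyperbolicity check is formal.
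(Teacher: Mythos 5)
The paper does not prove this statement: it is imported verbatim from \cite{Belegradek2008Rips}, and your sketch is essentially the argument of that source. You pass to the hyperbolic free product $G_0 = H * F(x_1,\ldots,x_m)$, add conjugation relators forcing the image of $H$ to be normal and encoding relators collapsing the quotient to $Q$, choose the right-hand sides in $H$ to satisfy a small cancellation condition over the hyperbolic group $G_0$, and invoke Ol'shanskii--Osin small cancellation theory over hyperbolic groups for hyperbolicity of $\Gamma$; the algebraic bookkeeping (normality of $N$, $\Gamma/N \cong Q$, $N$ a quotient of $H$) is exactly as you describe. The one caveat you raise is genuine: the construction requires $H$ to be non-elementary (one needs enough independent long words in $H$ to build small cancellation right-hand sides), and this hypothesis is present in Belegradek--Osin's actual theorem but has been dropped in the paper's restatement --- for elementary $H$ the statement fails, e.g.\ $H=\{1\}$ would force $\Gamma \cong Q$, which need not be hyperbolic. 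This omission is harmless for the paper's application, since the seed group of Lemma \ref{lem:seed_group} is non-elementary.
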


We can view this as the diagram in \Cref{figure:Rips}.

\begin{figure}[h]
\begin{center}
    \begin{tikzcd}
  & H \dar[two heads]{} && \\
  1 \rar{} &N \rar[tail]{} & \Gamma \rar[two heads]{} & Q \rar{}& 1
  \end{tikzcd}
\end{center}
\caption{The Belegradek and Osin version of Rips' construction.}
\label{figure:Rips}
\end{figure}

This is stronger than the classical setting as now properties of both $Q$ and $H$ impact $N$; in particular, if $H$ has trivial abelianisation then so does $N$.
We now construct the seed group $H$ used in the proof of Theorem \ref{thm:hyperbolicBODY}.

\begin{lemma}
\label{lem:seed_group}
There exists a $2$-generated torsion-free hyperbolic group with trivial abelianisation.
\end{lemma}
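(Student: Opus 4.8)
The plan is to realise $H$ as a metric small cancellation quotient of the free group $F=F(a,b)$, since such quotients are automatically hyperbolic and, under a mild hypothesis, torsion-free, while two-generation is built in. Concretely, I would fix a finite cyclically reduced set of relators $R=\{u_1,u_2\}$ whose symmetrisation satisfies the condition $C'(1/6)$, and set $H=\langle a,b\mid u_1,u_2\rangle$. By the standard theory of small cancellation groups (Lyndon--Schupp), a $C'(1/6)$ presentation admits Dehn's algorithm and hence is word-hyperbolic; moreover every element of finite order in such a group is conjugate to a power of one of the defining relators, so that if neither $u_1$ nor $u_2$ is a proper power in $F$ then $H$ is torsion-free. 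Thus hyperbolicity, torsion-freeness, and two-generation will all come for free, and the only genuine content is to arrange the abelianisation.

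To kill the abelianisation, observe that $H^{\mathrm{ab}}=\mathbb{Z}^2/L$, where $\mathbb{Z}^2=F^{\mathrm{ab}}$ is generated by the images of $a$ and $b$, and $L$ is the subgroup generated by the exponent-sum vectors $v(u_i)=(\sigma_a(u_i),\sigma_b(u_i))$ of the two relators. Hence it suffices to choose $u_1,u_2$ so that $v(u_1),v(u_2)$ form a basis of $\mathbb{Z}^2$; taking $v(u_1)=(1,0)$ and $v(u_2)=(0,1)$ forces $H^{\mathrm{ab}}=0$ at once. This reduces the problem to finding two words with prescribed exponent-sum vectors that also satisfy the small cancellation and no-proper-power hypotheses.

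It remains to produce relators meeting all of these constraints simultaneously, and this is the step needing the most care. I would take $u_1=a\,w_1$ and $u_2=b\,w_2$, where $w_1,w_2$ are long, cyclically reduced words with zero exponent sum in each of $a$ and $b$ (so that $v(u_1)=(1,0)$ and $v(u_2)=(0,1)$), chosen \emph{generically} so that no long subword of a cyclic conjugate of any $u_i^{\pm 1}$ reappears elsewhere. Since the $C'(1/6)$ inequality only excludes a vanishing proportion of sufficiently long words, and since a generic long word is neither a proper power nor balanced in a way that creates long pieces, such a choice exists; prepending a single letter to fix the exponent-sum vectors perturbs the piece-length estimates only negligibly. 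The main obstacle is exactly this bookkeeping: maintaining the $C'(1/6)$ bound while holding the abelianised images fixed and avoiding proper powers. Once a concrete family is written down (for instance by interleaving long, strictly growing blocks with compensating inverse blocks to enforce the zero exponent sums), verifying $C'(1/6)$ is a routine length computation.

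Finally, I would record that $H$ is nontrivial, so that the example is genuine: by Greendlinger's lemma any nonempty freely reduced word equal to $1$ in $H$ must contain more than half of some relator, and as the $u_i$ are long this forces $a\neq 1$ in $H$. Being a nontrivial torsion-free hyperbolic group with $H^{\mathrm{ab}}=0$, the group $H$ is then exactly the required $2$-generated torsion-free hyperbolic group with trivial abelianisation.
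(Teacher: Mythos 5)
Your strategy is exactly the paper's: a two-generator $C'(1/6)$ presentation, hyperbolic by small cancellation theory, torsion-free because neither relator is a proper power (Huebschmann), with trivial abelianisation forced by making the exponent-sum vectors of the two relators a basis of $\mathbb{Z}^2$. The paper's relators are $a^{-1}w$ and $b^{-1}v$ with $w,v\in[F(a,b),F(a,b)]$, i.e.\ precisely your vectors $(1,0)$ and $(0,1)$ up to sign.

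The one place your write-up falls short of a proof is the step you yourself flag as ``needing the most care'': you never actually exhibit the relators. You assert that a generic choice of long words with prescribed exponent sums is simultaneously $C'(1/6)$ and proper-power-free, and that prepending a letter ``perturbs the piece-length estimates only negligibly.'' This is true and standard (genericity of small cancellation is well known), but as written it is a heuristic, not an argument --- in particular the claim that genericity survives the exact exponent-sum constraints and the no-proper-power condition simultaneously is asserted rather than checked. The paper closes this gap by simply citing a concrete presentation due to Kapovich and Wise,
\[
\bigl\langle a, b \mid a=[a,b][a^2,b^2]\cdots[a^{100},b^{100}],\ b=[b,a][b^2,a^2]\cdots[b^{100},a^{100}]\bigr\rangle,
\]
whose $C'(1/6)$ property is already recorded in the literature. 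To make your version complete you would either have to write down such a family and do the piece-length bookkeeping, or cite an explicit example as the paper does; everything else in your argument is correct and matches the paper's reasoning.
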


\begin{proof}
As remarked by Kapovich and Wise \cite[page 2]{Kapovich2000equivalence}, the presentation
\begin{align*}
\langle a, b\mid
a&=[a, b][a^2, b^2]\cdots[a^{100}, b^{100}],\\
b&=[b, a][b^2, a^2]\cdots[b^{100}, a^{100}]\rangle
\end{align*}
is $C'(1/6)$ with relators of the form $a=w$ and $b=v$ with $w, v\in[F(a, b), F(a, b)]$, so defines a hyperbolic group with trivial abelianisation.
As the presentation is small cancellation and neither of the two relators are proper powers, the group is torsion-free \cite[Theorems 3 \& 4]{Huebschmann1979cohomology}.
\end{proof}

\p{The PCP for hyperbolic groups}
  Placing restrictions on the alphabet $\Sigma$, group $\Gamma$ and maps $g$ and $h$ allows us to investigate the boundary between decidability and undecidability.
  The \emph{binary PCP} is the PCP restricted to those instances $I=(\Sigma, \Gamma, g, h)$ where $|\Sigma|=2$.
  For $\mathfrak{X}$ a class of recursively presented groups, the \emph{PCP for $\mathfrak{X}$} is the PCP restricted to those instances $I=(\Sigma, \Gamma, g, h)$ where the group $\Gamma$ is in $\mathfrak{X}$.
  We can intersect such classes of instances, and so for example can consider the binary PCP for torsion-free hyperbolic groups.

  It is interesting to contrast this result to the classical binary PCP (for free monoids), which is decidable \cite{Ehrenfeucht1982generalized}. The binary PCP for free groups remains open.

  \begin{theorem}
  \label{thm:hyperbolicBODY}
  The binary PCP for torsion-free hyperbolic groups is undecidable.
  \end{theorem}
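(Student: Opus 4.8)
The plan is to reduce the subgroup membership problem for torsion-free hyperbolic groups, known to be undecidable, to the binary PCP for this class. First I would assemble a suitable group: fix a finitely presented \emph{torsion-free} group $Q$ with undecidable word problem, and let $H$ be the $2$-generated torsion-free hyperbolic group with trivial abelianisation supplied by \Cref{lem:seed_group}. Feeding $H$ and $Q$ into the Belegradek--Osin construction (\Cref{thm:BO_Rips}) yields $1\to N\to \Gamma\xrightarrow{\pi} Q\to 1$ with $\Gamma$ hyperbolic and $N$ a homomorphic image of $H$; since $H$ is $2$-generated with trivial abelianisation, so is $N$, say $N=\langle c_1,c_2\rangle$. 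One should check, as is built into their construction when $Q$ is torsion-free, that $\Gamma$ may be taken torsion-free. As $\Gamma$ is hyperbolic its word problem is decidable, so $N\neq 1$; a nontrivial free group has nontrivial abelianisation, so $N$ is not free, and hence the surjection $\mu\colon F(a,b)\to N$, $a\mapsto c_1$, $b\mapsto c_2$, has nontrivial kernel. Finally $\gamma\in N$ iff $\pi(\gamma)=_Q 1$, so membership in $N$ is undecidable.

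Next, given a word $\gamma$ over the generators of $\Gamma$, I would build the binary instance $I_\gamma=(\{a,b\},\Gamma,g,h)$ with $g=\mu$ and $h$ the composite of $\mu$ with conjugation by $\gamma$, that is $g(a)=c_1,\,g(b)=c_2$ and $h(a)=\gamma c_1\gamma^{-1},\,h(b)=\gamma c_2\gamma^{-1}$. This is well defined since $N\trianglelefteq\Gamma$, and both maps are non-injective because $\ker g=\ker h=\ker\mu\neq 1$; this is the promised pair of non-injective maps. Directly from the definition, a solution is exactly an $x$ with $g(x)=h(x)\neq 1$. Since $g(x)=h(x)$ says precisely $\mu(x)\in C_\Gamma(\gamma)$, and $\mu$ surjects onto $N$, the instance $I_\gamma$ admits a solution if and only if there is a nontrivial element of $N$ fixed under conjugation by $\gamma$, i.e. $N\cap C_\Gamma(\gamma)\neq\{1\}$.

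The heart of the argument is then to show $N\cap C_\Gamma(\gamma)\neq\{1\}$ iff $\gamma\in N$. The direction ($\Leftarrow$) is easy: if $\gamma\in N$ is nontrivial it centralises itself, and if $\gamma=1$ any nontrivial element of $N$ works. For ($\Rightarrow$) I would invoke the geometry of $\Gamma$, the case $\gamma=1$ being trivial. In a torsion-free hyperbolic group the centraliser of a nontrivial element $\gamma$ is infinite cyclic, say $C_\Gamma(\gamma)=\langle z\rangle$, so a nontrivial $n\in N\cap C_\Gamma(\gamma)$ equals $z^{j}$ with $j\neq 0$; then $\pi(z)^{j}=1$ in $Q$, and torsion-freeness of $Q$ forces $\pi(z)=1$, whence $z\in N$ and therefore $\gamma\in\langle z\rangle\le N$. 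Combining the two directions, $I_\gamma$ has a solution iff $\gamma\in N$, so a decision procedure for the binary PCP restricted to torsion-free hyperbolic groups would decide membership in $N$, which is impossible.

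I expect the main obstacle to be the passage ($\Rightarrow$): it rests on $N$ being \emph{isolated} in $\Gamma$, in the sense that $z^{j}\in N$ implies $z\in N$, which is exactly what torsion-freeness of $Q$ provides. The delicate points are therefore securing a torsion-free finitely presented $Q$ with undecidable word problem and confirming that the Belegradek--Osin output $\Gamma$ is genuinely torsion-free; by contrast, the cyclic-centraliser input and the reduction bookkeeping are routine for torsion-free hyperbolic groups.
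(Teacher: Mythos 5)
Your proof is correct, but the reduction at its core is genuinely different from the paper's. Both arguments share the same setup (the seed group of \Cref{lem:seed_group} fed into \Cref{thm:BO_Rips} with a torsion-free $Q$ of undecidable word problem, giving a torsion-free hyperbolic $\Gamma$ with a $2$-generated, trivially-abelianised, non-recognisable normal subgroup $N$), but the instances encode membership in $N$ differently. The paper sets $g(a)=1$, $g(b)=y$ and lets $h$ map onto $N$, so that a solution forces $\langle y\rangle\cap N\neq\{1\}$ (whence $y\in N$ by torsion-freeness of $Q$), and conversely uses the trivial abelianisation of $N$ to write $y=h(U)h(b)$ for some $U\in[F(a,b),F(a,b)]\leq\ker(g)$, exhibiting $Ub$ as a solution. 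You instead take $g=\mu$ and $h=\mathrm{ad}_\gamma\circ\mu$, turning the existence of a solution into the condition $N\cap C_\Gamma(\gamma)\neq\{1\}$, and then resolve that via the fact that centralisers of nontrivial elements in torsion-free hyperbolic groups are infinite cyclic, together with the same isolatedness of $N$ coming from torsion-freeness of $Q$. The trade-off: the paper's argument is purely algebraic once $\Gamma$ is built --- it uses hyperbolicity only through the construction, and leans essentially on the trivial abelianisation of $N$ --- whereas yours invokes the geometry of $\Gamma$ (cyclic centralisers) but needs nothing about the abelianisation of $N$ for correctness (you use it only to guarantee both maps are non-injective, which the statement of the theorem does not require); on the other hand, your route is less portable to settings where centralisers fail to be cyclic, and it has the mildly amusing feature of passing through a twisted-conjugacy-type condition, echoing the NPCP material later in the paper. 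All the facts you rely on (torsion-freeness of the Belegradek--Osin output, existence of a torsion-free $Q$ with undecidable word problem, $N\neq\{1\}$, cyclic centralisers) are available and correctly deployed, so there is no gap.
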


\begin{proof}
Take $H$ in Theorem \ref{thm:BO_Rips} (i.e. in Belegradek--Osin's Rips' construction) to be a $2$-generated torsion-free hyperbolic group with trivial abelianisation, which we know exists by Lemma \ref{lem:seed_group}.
As $N$ is a homomorphic image of $H$, this group is also $2$-generated with trivial abelianisation.
Now, take $Q$ to be torsion-free with undecidable word problem (such a group exists, see for example \cite{Collins1999aspherical}).
Then $\Gamma$ is torsion-free \cite[Theorem 1.1.d]{Belegradek2008Rips}.

For any element $y\in \Gamma \setminus \{1\}$, we define the instance $I_y=(\{a, b\}, \Gamma, g, h)$ of the binary PCP for $\Gamma$ by setting $g, h\colon F(a, b)\to \Gamma$ to be the maps defined by $g(a)=1, g(b)=y$ and $h(a), h(b)$ to be generators for $N \leq \Gamma$; that is, $\langle h(a), h(b)\rangle =N$.
We will now show that $y \in N$ if and only if $\eq(g, h) / (\ker(g) \cap \ker(h))$ is non-trivial.

Suppose first that $y\not\in N$. As $Q$ is torsion-free, we have that $y^n\not\in N$ for all $n$.
Therefore, $\im(g)\cap \im(h) = \langle y \rangle \cap N=\{1\}$ is trivial, so $\eq(g, h)/(\ker(g)\cap \ker(h))$
is trivial.

Suppose, for the other implication, that $y\in N$; we will prove that $\eq(g, h)/(\ker(g)\cap \ker(h))$ is non-trivial.
Since $h(b)\in N$ and $y\in N$, as $N$ has trivial abelianisation, we get $y(h(b))^{-1}\in[N, N]$.
Therefore, there exists some word $U\in [F(a, b), F(a, b)]$ such that $y(h(b))^{-1}=h(U)$.
Moreover, $U\in [F(a, b), F(a, b)]\leq\ker(g)$, so the identity just obtained gives us $h(Ub)=y=g(U)y=g(Ub)$ and
thus $Ub\in\eq(g, h)$; since $Ub\in\eq(g, h)$ and $Ub \notin \ker(g)$, we get that $\eq(g, h)/(\ker(g)\cap \ker(h))$ is non-trivial, as claimed.

Therefore, $\eq(g, h)/(\ker(g)\cap \ker(h))$ is trivial if and only if $y\not\in N$. As $Q$ has undecidable word problem, $N$ has undecidable membership problem.
The result follows.
\end{proof}

Theorem \ref{thm:hyperbolic} follows immediately from Theorem \ref{thm:hyperbolicBODY}, as for example the binary PCP is a special case of the PCP.

Note that neither of the maps $g, h\colon F(a, b)\to\Gamma$ in the proof of Theorem \ref{thm:hyperbolicBODY} are injective.
For $g$, this is because the image is cyclic so clearly not free of rank $2$.
For $h$, this is because the image is the group $N$, which has trivial abelianisation so again not free of rank $2$.


\section{Finite index overgroups and the NPCP}
\label{sec:virtually}

Let $K$ be a finitely generated group.
A group $\Gamma$ is \emph{virtually $K$} if it contains an embedded copy of $K$ as a finite index subgroup.
Then $\Gamma$ is also finitely generated, so $\Gamma=\langle\Delta\rangle$ with $|\Delta|<\infty$.
When considering a virtually $K$ group $\Gamma$ as an input to an algorithm, we shall take a finite generating set $\Delta_K\subseteq\langle\Delta\rangle$ for $K$ as part of this input.

In this section we give a general theorem for proving decidability of the (kernel-based) PCP for virtually $K$ groups, which we require for Theorem \ref{thm:nilpotent}.
This method is based on the \emph{non-homogeneous PCP} (NPCP). In agreement with how we treat PCP in the rest of the paper, NPCP will ask for nontrivial solutions $x\neq 1$.
Our main result on the NPCP is \Cref{thm:virtually}; our use of the NPCP here is entirely functional and the assumption that $x\neq1$ is required for the proof.
The same problem, but allowing solutions $x=1$, was defined and studied by Myasnikov, Nikolaev and Ushakov \cite{Myasnikov2014Post}, and we denote their version by NPCP$^1$.
In the context of \Cref{thm:virtually}, NPCP is essentially equivalent to NPCP$^1$, as Lemma \ref{lem:NPCP} shows.

\p{The NPCP}
An instance of the NPCP is a tuple $I=(\Sigma, \Gamma, g, h, u_1, u_2, v_1, v_2)$ with $g, h\colon F(\Sigma)\rightarrow \Gamma$ group homomorphisms and $u_1, u_2, v_1, v_2\in \Gamma$.
The NPCP is the decision problem:
\begin{align*}
	&\textnormal{
		Given $I_{\NPCP}=(\Sigma, \Gamma, g, h, u_1, u_2, v_1, v_2)$,}\\
	&\textnormal{is there $x\in F(\Sigma)\setminus\{1\}$ such that $u_1g(x)u_2=v_1h(x)v_2$?}
\end{align*}

The identity above can be rewritten in any group as $ug(x)=h(x)v$ by letting $u=v_1^{-1}u_1$ and $v=v_2u_2^{-1}$, but we keep the statement in terms of $u_i$ and $v_i$ as this definition corresponds to what is called the ``generalised PCP'' (GPCP) for free monoids. However, we
use the ``non-homogeneous'' phrasing instead, as for our definition of the (kernel-based) PCP for groups this is a generalisation of the PCP only when one of $g$ or $h$ is injective (under this constraint, one takes $u_1=u_2=v_1=v_2=1$ to get the PCP for $\Gamma$).

By Lemma~\ref{lem:WP}, the PCP generalises the word problem.
Similarly, the NPCP generalises the conjugacy problem.
It also generalises the ``twisted conjugacy problem for pairs of endomorphisms'' \cite[Proposition 3.2]{Myasnikov2014Post}.
\begin{lemma}\label{lem:CP}
If a recursively presented group has decidable NPCP, then it has decidable
conjugacy problem.
\end{lemma}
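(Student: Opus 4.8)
The plan is to reduce the conjugacy problem for $\Gamma$ to the NPCP. Fix a finite generating set $\Delta$ of $\Gamma$ and let $\pi\colon F(\Delta)\to\Gamma$ denote the canonical surjection. Given a conjugacy instance consisting of two elements $a,b\in\Gamma$ (presented as words over $\Delta$), I would build the NPCP instance $I=(\Sigma,\Gamma,g,h,u_1,u_2,v_1,v_2)$ in which $\Sigma=\Delta\sqcup\{s\}$ for a fresh symbol $s$, the maps $g=h\colon F(\Sigma)\to\Gamma$ restrict to $\pi$ on $\Delta$ and send $s\mapsto 1$, and $u_1=a$, $u_2=1$, $v_1=1$, $v_2=b$. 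With these choices the defining equation $u_1g(x)u_2=v_1h(x)v_2$ becomes simply $a\,g(x)=g(x)\,b$, which is effectively constructible from $a$, $b$ and the given recursive presentation.

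The heart of the argument is then the equivalence: $a$ and $b$ are conjugate in $\Gamma$ if and only if $I$ admits a (nontrivial) solution. The backward direction is immediate: a solution $x\neq 1$ gives $c:=g(x)\in\Gamma$ with $ac=cb$, hence $c^{-1}ac=b$, so $a$ and $b$ are conjugate. For the forward direction, suppose $ac=cb$ for some $c\in\Gamma$; since $g$ is surjective I may choose $w\in F(\Sigma)$ with $g(w)=c$, and if $w\neq 1$ this $w$ is already a solution to $I$.

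The one point requiring care --- and the reason I include the extra generator $s$ --- is the case $c=1$, i.e.\ $a=b$, where the only obvious conjugator is trivial and would force $x=1$, which the NPCP forbids. Here the symbol $s$ does the work: since $g(s)=1$, the nontrivial element $x=s$ satisfies $a\,g(s)=a=b=g(s)\,b$, so it is a genuine solution. More conceptually, appending $s$ guarantees $\ker g\neq\{1\}$, so that whenever $a$ and $b$ are conjugate we may always realise a conjugating element as $g(x)$ for some nontrivial $x$. I expect this --- correctly handling the trivial-conjugator case under the $x\neq 1$ convention --- to be the only subtlety; the remaining verifications are routine, and an oracle for the NPCP applied to $I$ therefore decides conjugacy of $a$ and $b$.
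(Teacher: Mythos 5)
Your construction is essentially the paper's proof: the paper takes $g=h$ to be a canonical surjection $F(\Sigma)\to\Gamma$ and observes that the instance $(\Sigma,\Gamma,g,g,a,1,1,b)$ has a solution if and only if $a$ and $b$ are conjugate. The only difference is your dummy generator $s$, a correct and careful safeguard for the $x\neq 1$ convention that the paper glosses over; it is in fact not strictly necessary, since whenever $a$ and $b$ are conjugate the set of conjugators is the coset $C_\Gamma(a)c$, which contains a non-identity element (namely $ac$ when $a\neq 1$, and everything when $a=b=1$), and the preimage of such an element under the surjection $g$ already yields a nontrivial solution.
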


\begin{proof}
  Let \(\Gamma\) be a finitely presented group. By the
  universal property of free groups, there exists a set $\Sigma$ with
  corresponding free group $F(\Sigma)$, and a surjective homomorphism $g\colon
  F(\Sigma)\rightarrow \Gamma$. Then let $g=h$, and observe that the instance
\[
(\Sigma, \Gamma, g, g, u, 1, 1, v)
\]
of the NPCP has a solution if and only if $u, v\in\Gamma$ are conjugate.
\end{proof}

We end our introduction to the NPCP by connecting it to the NPCP$^1$, as defined by Myasnikov, Nikolaev and Ushakov.
This lemma means that in the statement of Proposition \ref{thm:virtually}, either of NPCP or NPCP$^1$ can be used.

\begin{lemma} \label{lem:NPCP}
Let $\Gamma$ be a recursively presented group and suppose the (kernel-based) PCP is decidable for $\Gamma$. Then the NPCP is decidable for $\Gamma$ if and only if NPCP$^1$ is decidable for $\Gamma$.
\end{lemma}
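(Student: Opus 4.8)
The plan is to prove the two implications separately, exploiting that the NPCP and NPCP$^1$ solution sets of a given instance differ only in whether the identity $x=1$ is counted. The one preliminary fact I would use throughout is that decidability of the PCP gives decidability of the word problem in $\Gamma$ by Lemma~\ref{lem:WP}; in particular, for any instance $(\Sigma, \Gamma, g, h, u_1, u_2, v_1, v_2)$ I can decide whether $x=1$ is a solution, since this holds precisely when $u_1u_2 = v_1v_2$ in $\Gamma$, a single word-problem query.

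For the direction ``NPCP decidable $\Rightarrow$ NPCP$^1$ decidable'' I would note that an NPCP$^1$ instance has a solution if and only if either the corresponding NPCP instance has a (nontrivial) solution or $x=1$ is a solution. Both disjuncts are decidable — the first by the NPCP oracle, the second by the word problem — so NPCP$^1$ is decidable. This direction is routine.

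The substantive direction is ``NPCP$^1$ decidable $\Rightarrow$ NPCP decidable''. Given an NPCP instance, I would first test whether $x=1$ is a solution. If it is not, then every solution is automatically nontrivial, so the NPCP and NPCP$^1$ instances have the same answer and the NPCP$^1$ oracle suffices. If $x=1$ is a solution, then $u_1u_2 = v_1v_2$; equivalently, writing $u := v_1^{-1}u_1$ and $v := v_2u_2^{-1}$ as in the definition, we have $u=v$, and the defining identity rewrites as $u\,g(x)\,u^{-1} = h(x)$. Setting $g'(x) := u\,g(x)\,u^{-1}$, a homomorphism with $\ker(g') = \ker(g)$, the nontrivial solutions are exactly the nontrivial elements of $\eq(g', h)$, so the problem becomes: decide whether $\eq(g', h) \neq \{1\}$.

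To attack this I would run the PCP oracle on $(\Sigma, \Gamma, g', h)$. If it reports $\eq(g', h) \neq \ker(g') \cap \ker(h)$, then a fortiori $\eq(g', h) \neq \{1\}$ and a nontrivial solution exists. The remaining case, $\eq(g', h) = \ker(g') \cap \ker(h)$, is where I expect the main obstacle: here a nontrivial solution exists if and only if the common kernel $\ker(g') \cap \ker(h) = \ker(g) \cap \ker(h)$ is nontrivial. By the dichotomy recorded in Section~\ref{sec:prelim}, for $|\Sigma| \geq 2$ this intersection is nontrivial exactly when neither $g$ nor $h$ is injective (and for $|\Sigma|=1$ the question reduces to an order computation in $\Gamma$), so I would reduce to deciding non-injectivity of the two maps. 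A nontrivial common kernel element, if it exists, is exposed by enumerating reduced words and testing with the word problem, which gives a semi-decision procedure for ``$\eq(g',h)\neq\{1\}$''; the hard part — and the step on which I would concentrate — is to \emph{certify the absence} of such an element, equivalently to certify injectivity of one of the maps, which is precisely the kind of information the kernel-based PCP is designed not to see directly. Closing this gap using only the hypothesis of a decidable PCP for $\Gamma$ is the crux of the proof, and is where I would expect the real work to lie.
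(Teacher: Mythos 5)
Your first direction, and the easy case ($u_1u_2\neq v_1v_2$) of the second direction, match the paper's proof exactly, and your reduction of the hard case --- conjugating $g$ by $w=v_1^{-1}u_1$ and asking whether $\eq(g',h)\neq\{1\}$ --- is also the paper's reduction. But the proposal stops at the crux rather than closing it: you never give a procedure that decides, when $u_1u_2=v_1v_2$, whether $\ker(g)\cap\ker(h)$ is trivial. Enumerating words only semi-decides non-triviality, and the PCP oracle, as you yourself observe, cannot distinguish $\eq(g',h)=\{1\}$ from $\eq(g',h)=\ker(g')\cap\ker(h)\neq\{1\}$. So as written this is an honest account of the difficulty, not a proof of the lemma.

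The paper closes the gap by reversing the order of your last two steps: before calling any oracle it \emph{decides whether each of $g$ and $h$ is injective}, asserting that standard algorithms (Nielsen reduction, Stallings' foldings) do this. Granting that, the dichotomy you cite from Section~\ref{sec:prelim} finishes everything. If both maps are non-injective, then $\ker(g)\cap\ker(h)\neq\{1\}$, and since $u_1u_2=v_1v_2$ every element of this intersection is a solution; the answer is ``yes'' with no oracle call at all. If at least one map is injective, then $\ker(f)\cap\ker(h)=\{1\}$ for $f(x)=wg(x)w^{-1}$, so the kernel-based PCP instance $(\Sigma,\Gamma,f,h)$ asks precisely whether $\eq(f,h)\neq\{1\}$ --- exactly your question --- and the PCP oracle answers it. In other words, the injectivity information is not something the PCP oracle must ``see'': it is obtained separately and first, and in the only branch where the common kernel could be non-trivial you never need to certify its absence, because you already know it is present. (One could press the paper on justifying the injectivity test for maps into an arbitrary recursively presented $\Gamma$, since Nielsen reduction and foldings are stated for free codomains; but that assertion is the single ingredient your write-up is missing.)
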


\begin{proof}
Note that the word problem for $\Gamma$ is decidable by Lemma \ref{lem:WP}, since $\Gamma$ has decidable PCP.
Note also that an instance of the NPCP is also an instance of the NPCP$^1$, and vice versa. In the following, we fix an ``instance'' and write $I$ for it when viewing it as an instance of the NPCP, and $I^1$ when viewing it as an instance of the NPCP$^1$.

Suppose the NPCP is decidable for $\Gamma$.
Let $I^1$ be an instance of the NPCP$^1$; by assumption we may solve the associated instance $I$ of the NPCP.
If $I$ has a solution $x$, then this will also be a solution to $I^1$.
If $I$ has no solutions, then $I^1$ has a solution if and only if $x=1$ is a solution to $I^1$;
this is equivalent to the identity $u_1u_2=v_1v_2$ holding in $\Gamma$, which we can check as $\Gamma$ has decidable word problem.
Hence, we may solve $I^1$ as required.

Now suppose the NPCP$^1$ is decidable for $\Gamma$.
Consider an instance $I=(\Sigma, \Gamma, g, h, u_1, u_2, v_1, v_2)$ of the NPCP; by assumption we may solve the associated instance $I^1$ of the NPCP$^1$.
Firstly, assume $u_1u_2\neq v_1v_2$ in $\Gamma$; then $x=1$ cannot be a solution, so $I^1$ and $I$ have the same solution sets, and so we can solve $I$ as we can solve $I^1$.
Secondly, assume $u_1u_2 = v_1v_2$ in $\Gamma$; then $I^1$ will have solution $x=1$ but might not return any other solution.
We proceed by determining if $g,h$ are injective (standard algorithms do this for us, e.g. Nielsen reduction or Stallings' folding algorithm).
If both $g, h$ are non-injective then any $x\in \ker(g)\cap\ker(h)$ is also a solution, and moreover $\{1\}\neq \ker(g)\cap\ker(h)$
(this is clear if $\langle \ker(g), \ker(h)\rangle$ is cyclic, while otherwise $\{1\}\neq[\ker(g), \ker(h)]\leq(\ker(g)\cap\ker(h))$)
so there exists a non-trivial solution; hence we can solve $I$.
If one of $g, h$ is injective, define the word $w:=v_1^{-1}u_1=v_2u_2^{-1}$, and the map $f\colon F(\Sigma)\to \Gamma$ by $f(x)=wg(x)w^{-1}$ for all $x\in F(\Sigma)$.
This gives an instance $J=(\Sigma, \Gamma, f, h)$ of the PCP for $\Gamma$ and, as $g$ or $h$ is injective, $x\in F(\Sigma)$ is a solution to $J$ if and only if it is a solution to $I$.
By hypothesis we can solve $J$, so we can solve $I$ as required.
\end{proof}

\p{Finite index overgroups}
We now apply the NPCP to the problem of solving the PCP in finite index overgroups (Proposition \ref{thm:virtually}).
Our proof uses coset enumeration; this classical algorithm takes as input a finite presentation of a group $\Gamma$ and a finite generating set of a finite index subgroup $K$ of $\Gamma$, and outputs a representative for each left (say) coset of $K$ in $\Gamma$ \cite[Chapter 5]{Holt2005Handbook}.
The group $\Gamma$ acts on these cosets by left multiplication, and an element $x\in\Gamma$ is contained in $K$ if and only if it fixes the trivial coset $1K$.
This permutation representation therefore gives a solution to the membership problem for such a subgroup $K$ (as defined in \Cref{sec:hyperbolic}).

\begin{proposition}
	\label{thm:virtually}
	Let $K$ be a finitely presented group.
	If the (kernel-based) PCP and the NPCP are decidable for $K$, then the (kernel-based) PCP is decidable for virtually $K$ groups.
\end{proposition}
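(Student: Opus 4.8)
The plan is to reduce the PCP for $\Gamma$ to finitely many instances of the PCP and the NPCP$^1$ for $K$, organised by a coset decomposition of $F(\Sigma)$ that is governed by the finite permutation action of $\Gamma$ on the cosets of $K$. Write $F=F(\Sigma)$ and note first that, since $g(x)=h(x)$ places $x$ in $\ker(g)\cap\ker(h)$ exactly when the common value is trivial, a solution to $I=(\Sigma,\Gamma,g,h)$ is precisely an $x\in F$ with $g(x)=h(x)\neq1$. I would run coset enumeration on the finite presentation of $\Gamma$ and the given generating set of $K$ to obtain the index $[\Gamma:K]=n$ and the permutation representation $\rho\colon\Gamma\to\operatorname{Sym}(\Gamma/K)$. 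The subgroup $\Lambda:=g^{-1}(K)\cap h^{-1}(K)$ consists of those $x\in F$ for which both $\rho(g(x))$ and $\rho(h(x))$ fix the trivial coset; as the preimage of a finite-index subgroup under a homomorphism to a finite group, $\Lambda$ has finite index in $F$, so by Nielsen--Schreier it is free of finite rank, and a free basis together with a left transversal $s_1=1,s_2,\dots,s_m$ can be computed from this finite action.

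Next I would decompose each $x\in F$ uniquely as $x=s_i\lambda$ with $\lambda\in\Lambda$ and analyse the equation $g(x)=h(x)$ coset by coset. Since $g(\lambda),h(\lambda)\in K$, we have $g(x)\in g(s_i)K$ and $h(x)\in h(s_i)K$, so $g(x)=h(x)$ can hold only when $g(s_i)K=h(s_i)K$; call such an $s_i$ \emph{admissible}, a condition that is checkable via $\rho$, and note that inadmissible cosets contribute no solution. For admissible $i$, set $w_i:=g(s_i)^{-1}h(s_i)\in K$; then the equation $g(x)=h(x)$ rewrites as $g(\lambda)=w_i\,h(\lambda)$, an identity between the restrictions $g|_\Lambda,h|_\Lambda\colon\Lambda\to K$ twisted by the constant $w_i\in K$.

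It remains to incorporate the non-triviality condition $g(x)\neq1$ and to recognise each coset problem as a PCP or NPCP$^1$ instance for $K$. For $i=1$ (so $s_1=1$ and $w_1=1$) the problem is to find $\lambda\in\Lambda$ with $g(\lambda)=h(\lambda)\neq1$, which is exactly an instance of the PCP for $K$ with domain $\Lambda$. For admissible $i\geq2$ I would first observe that $g(s_i)\notin K$, since $g(s_i)\in K$ would, by admissibility, force $h(s_i)\in K$ and hence $s_i\in\Lambda$, contradicting that $s_i$ represents a non-trivial coset; consequently $g(x)=g(s_i)g(\lambda)\in g(s_i)K$ is never $1$, so the non-triviality condition is automatic and we need only some $\lambda\in\Lambda$ (possibly $\lambda=1$) solving $g(\lambda)=w_ih(\lambda)$. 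Taking $u_1=u_2=v_2=1$ and $v_1=w_i$, this is an instance of the NPCP$^1$ for $K$ with domain $\Lambda$, which is decidable: the NPCP is decidable for $K$ by hypothesis and the PCP, hence by \Cref{lem:WP} the word problem, is decidable for $K$, so by \Cref{lem:NPCP} the NPCP$^1$ is decidable for $K$ as well. The instance $I$ therefore has a solution if and only if at least one of these finitely many subproblems does, which yields the decision procedure.

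I expect the main obstacle to be effectivity rather than the logical structure. The key points to verify are that coset enumeration terminates and produces $\rho$ and the transversal; that $\Lambda$, a free basis for it, and its transversal $\{s_i\}$ are computable from the finite action; and, most delicately, that all the elements of $K$ appearing in the subproblems — the constants $w_i$ and the images under $g|_\Lambda$ and $h|_\Lambda$ of the basis of $\Lambda$ — which are produced as words in the generators of $\Gamma$, can be rewritten as words over a generating set of $K$ via Reidemeister--Schreier, so that the PCP and NPCP$^1$ instances are genuinely presented as inputs over $K$. Once this bookkeeping is in place the case analysis is complete and the reduction is finite, so decidability of the PCP and NPCP for $K$ gives decidability of the PCP for $\Gamma$.
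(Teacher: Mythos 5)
Your proof is correct, and its skeleton is the same as the paper's: decompose $F(\Sigma)$ into cosets of $\Lambda=g^{-1}(K)\cap h^{-1}(K)$ (the paper's $N_K$), solve a PCP instance over $K$ on the trivial coset, and a non-homogeneous instance with constant $g(s_i)^{-1}h(s_i)$ on each admissible non-trivial coset. Where you genuinely diverge is in handling the non-triviality condition $x\notin\ker(g)\cap\ker(h)$. You observe that for an admissible $s_i\notin\Lambda$ one has $g(s_i)\notin K$, so any $x\in\eq(g,h)\cap s_i\Lambda$ satisfies $g(x)\in g(s_i)K\neq K$ and hence $g(x)\neq 1$ automatically; membership in the equaliser therefore already makes $x$ a solution. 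This lets you absorb the case $\lambda=1$ into a single NPCP$^1$ instance per coset (decidable by \Cref{lem:NPCP} from the two hypotheses) and removes any interaction between cosets. The paper instead treats elements of the equaliser outside $N_K$ only as ``potential solutions'', first runs the PCP over $K$ to rule out solutions inside $N_K$, then proves a lemma that all potential solutions in a fixed coset $p_iN_K$ are simultaneously solutions or non-solutions, collects one representative per coset into a list, and tests each with the word problem for $\Gamma$; it also splits off $x=p_i$ as a separate case handled by the word problem rather than by NPCP$^1$. Your version is shorter and logically cleaner at that point, at the cost of leaning on \Cref{lem:NPCP} and on the effectivity bookkeeping (computing a basis and transversal for $\Lambda$ and rewriting the constants and generator images over $\Delta_K$), which you correctly identify and which is no harder than what the paper already does via coset enumeration and the decidable word problem for $\Gamma$.
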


\begin{proof}
Let $\Gamma$ be a group that contains $K$ as a finite index subgroup and
	let $I=(\Sigma, \Gamma, g, h)$ be an instance of the PCP for $\Gamma$.

	As $K$ has finite index in $\Gamma$, there exists a finite index
	subgroup $N_K$ of $F(\Sigma)$ such that $g(N_K)\leq K$ and $h(N_K)\leq K$ (for example, take $N_K$ to be $g^{-1}(K)\cap h^{-1}(K)$).
As explained in the preamble, as $K$ has finite index in $\Gamma$, we can algorithmically determine membership in $K$, and so by enumerating all finite index subgroups of $F(\Sigma)$ and then computing a basis for each, we can compute a finite basis $\Sigma_K$ for some such subgroup $N_K$ (this basis is used implicitly throughout the following) as well as a set of coset representatives $p_1, \ldots, p_n$ for $N_K$ in $F(\Sigma)$ (via coset enumeration).

	We therefore have an instance $I'=(\Sigma_K, K, g|_{N_K}, h|_{N_K})$ of the PCP for $K$.
	By assumption, this problem is decidable. Moreover, its solutions correspond precisely to solutions to $I$ that are contained in $N_K$, as supposing $x\in N_K$, then $g|_{N_K}(x)=h|_{N_K}(x)$ if and only if $g(x)=h(x)$, and $x\in\ker(g|_{N_K})\cap\ker(h|_{N_K})$ if and only if $x\in\ker(g)\cap\ker(h)$.
	Therefore, we can determine if there exists some solution $x\in N_K$ to $I$. Our next step is therefore to run the PCP algorithm for $I'$, and if such a solution exists then we output that $I$ has a solution for $\Gamma$.

	Now assume that there are no solutions to $I$ belonging to $N_K$.
	We consider \emph{potential solutions} $x\in\eq(g, h)\setminus N_K$; every potential solution $x$
decomposes as $p_iq$, where $p_i\not\in N_K$, $i\in\{1, \ldots, n\}$, is one of the pre-computed coset representatives of $N_K$ and $q \in N_K$, and so
every potential solution $x$ is of one of two forms:
\begin{enumerate}
\item $x=p_i$ for some $i$ (so $x$ belongs to a finite set of known constants), or
\item $x=p_iq$ for some $i$ and some $q\in N_K\setminus\{1\}$.
\end{enumerate}
Consider the potential solutions of type (1).
As $K$ has decidable NPCP, it has decidable word problem.
As we can determine membership of $K$ for elements of $\Gamma$, the group $\Gamma$ also has decidable word problem.
Hence, for each $p_i$ we can determine if $p_i\in\eq(g, h)$.
Therefore, for each $p_i$ we can determine if there exists a potential solution of type (1).

Consider the potential solutions of type (2).
If $p_iN_K$ contains a potential solution $x=p_iq$ then $h(p_i)^{-1}g(p_i)=h(q)g(q)^{-1}\in K$, and so $I_{\NPCP}^{(i)}:=(\Sigma_K, K, g, h, h(p_i)^{-1}g(p_i), 1, 1, 1)$ is an instance of the NPCP for $K$ (as the constants are in $K$).
Then, by rearranging $g(p_i)g(q)=h(p_i)h(q)$ and noting that $q\in N_K\setminus\{1\}$, we see that $x=p_iq$ is a potential solution of type (2) if and only if $q$ is a solution to $I_{\NPCP}^{(i)}$.
By assumption, we can determine if the instance $I_{\NPCP}^{(i)}$ of the NPCP has a solution.
Therefore, for each $p_i$ we can determine if there exists a potential solution of type (2), first by determining if $h(p_i)^{-1}g(p_i)\in K$, and then by determining if $I_{\NPCP}^{(i)}$ has a solution.

Now, suppose $x_i, x_i'\in\eq(g, h)\cap p_iN_K$ are potential solutions corresponding to the same $p_i$.
Then $x_i'$ is a solution to $I$ if and only if $x_i$ is a solution to $I$, and to see this note that them being potential solutions gives us that $g(x_i)=h(x_i)$, $g(x_i')=h(x_i')$, and $x_i^{-1}x_i'=q\in N_K$.
These give us that $g(q)=h(q)$, and, by applying the assumption $I$ has no solutions in $N_K$, we have that $q\in\ker(g)\cap\ker(h)$.
Therefore, $x_i'$ is a solution to $I$ if and only if $x_i'=x_iq\not\in\ker(g)\cap\ker(h)$, if and only if $x_i\not\in\ker(g)\cap\ker(h)$, if and only if $x_i$ is a solution to $I$, as required.

Hence, for fixed $i$ we can determine if there exists some potential solution $x\in\eq(g, h)\cap p_iN_K$, and moreover we can find such an element (by first checking if $p_i$ is such an element, and if not then by enumerating the non-trivial elements of $N_K$ and then using the word problem for $K$ to check if each given element is a solution to $I_{\NPCP}^{(i)}$).
Our algorithm therefore proceeds by looping through the $p_i$, and finding an element $x_i\in\eq(g, h)\cap p_iN_K$, if one exists, as discussed above.
These finitely-many potential solutions (at most one for each $p_i$) are stored in a list $L_{\mathcal{PS}}$.
Crucially, by the above paragraph, $I$ contains a solution if and only if $L_{\mathcal{PS}}$ contains a solution.

	We now loop through the list $L_{\mathcal{PS}}$.
	So, let $x_i$ be a potential solution in $L_{\mathcal{PS}}$.
	As the word problem for $K$ is decidable, the word problem for $\Gamma$ is decidable, so we can determine if $x_i\in\ker(g)\cap\ker(h)$.
	If $x_i\not\in\ker(g)\cap\ker(h)$, then $x_i$ is a solution to $I$, so output that $I$ has a solution and terminate the algorithm.
	If $x_i\in\ker(g)\cap\ker(h)$, then, by the above paragraph, $\eq(g, h)\cap p_iN_K$ contains no solutions to $I$.
	Therefore, move on to the next $x_i$ in the list.
	If the loop ends with no solution being detected, $I$ has no solutions and so output this fact and terminate the algorithm.
\end{proof}


\section{Virtually nilpotent groups and virtually $\mathfrak{A}$ groups}
\label{sec:nilpotent}
In this section we prove Theorem \ref{thm:nilpotent}, on virtually nilpotent groups.
Like hyperbolic groups, finitely generated nilpotent groups have decidable word and conjugacy problems \cite{blackburn_nilpotent_conjugacy}.
However, most similarities with hyperbolic groups end here.
Unlike with hyperbolic groups, the satisfiability of systems of equations in (free) nilpotent groups is undecidable \cite{romankov_undecidable_first}, and
  many papers have followed this discussing different types of equations in
  various nilpotent groups \cites{duchin_liang_shapiro, Romankov_commutators,
  repin83, random_nilpotent}.
In addition, the knapsack problem is undecidable \cite{Konig2016Knapsack}.
On the other hand, and again contrasting with hyperbolic groups, the subgroup membership problem is decidable, and there exists an algorithm to compute generating sets for intersections of finitely generated subgroups \cites{Lo1998finding, baumslag_etal_polycyclic_algorithms} (many similar positive results extend to polycyclic groups).

Therefore, from an algorithmic viewpoint, hyperbolicity and nilpotency are somewhat opposite. Since the PCP is undecidable for hyperbolic groups, this intuition suggests it is decidable for nilpotent groups.
  This is indeed the case, with Myasnikov, Nikolaev and Ushakov proving decidability \cite[Theorem 5.8]{Myasnikov2014Post} (their proof actually addresses our definition of the PCP, rather than theirs, which is a mistake in their exposition).
  The purpose of this section is to extend Myasnikov, Nikolaev and Ushakov's result to virtually nilpotent groups (i.e. groups containing a nilpotent subgroup of finite index).

\p{Varieties of groups}
A \emph{variety of groups} is a class of groups $\mathfrak{A}$ closed under taking subgroups, homomorphic images and unrestricted direct products.
Equivalently, a class of groups $\mathfrak{A}$ is a variety if for all free groups $F$ there exists a subset $\mathfrak{w}_{\mathfrak{A}}\subset F$ such that the elements of $\mathfrak{A}$ which are homomorphic images of $F$ are precisely the groups $G$ such that for every homomorphism $\phi\colon F\to G$, we have $\mathfrak{w}_{\mathfrak{A}}\subseteq\ker(\phi)$. We call the set of words $\mathfrak{w}_{\mathfrak{A}}$ the \emph{laws} (or \emph{identities}) for $\mathfrak{A}$.
A variety is \emph{proper} if it is not simply the class of all groups.

Given a group $\Gamma$, we can define its ``laws'' as follows. For some infinite set of variables $\Sigma_{\infty}=\{X_1, X_2, \ldots\}$ and a finite word $w=w(X_1, \dots, X_k)\in F(\Sigma_{\infty})$ on these variables, denote by $\Gamma_w$ the set of all the \emph{values} $w$ takes in $\Gamma$; that is, $\Gamma_w=\{w(g_1, \dots, g_k) \mid g_i \in \Gamma\}$.
A word $w=w(X_1, \dots, X_k)$ as above is a \emph{law} (or \emph{identity}) in $\Gamma$ if
$\Gamma_w=\{1\}$, or equivalently, $w(g_1, \dots, g_k)=1$ in $\Gamma$ for any choice of $g_1, \dots, g_k \in \Gamma$.
A variety $\mathfrak{A}$ is then simply the class of all those groups $\Gamma$ for which $\mathfrak{w}_{\mathfrak{A}}$ are laws in $\Gamma$.

For example, the commutator $[X_1, X_2]$ is a law in any abelian group, and higher commutators are laws in the appropriate class nilpotent groups.
Thus the classes of all groups, abelian groups, nilpotent groups (of arbitrary class or of fixed class $c$), soluble groups (of arbitrary derived length or of fixed derived length $d$), and periodic groups (of arbitrary exponent or fixed exponent $e$) each form a variety.

Varieties have free objects: If $F(\Sigma)$ is a free group, we write $\langle\mathfrak{w}_{\mathfrak{A}}\rangle$ for the minimal normal subgroup of $F(\Sigma)$ such that $F(\Sigma)/\langle\mathfrak{w}_{\mathfrak{A}}\rangle$ is in $\mathfrak{A}$, and so the quotient $F_{\mathfrak{A}}(\Sigma):=F(\Sigma)/\langle\mathfrak{w}_{\mathfrak{A}}\rangle$ is free of rank $|\Sigma|$ in this variety.

Most of our proof of Theorem \ref{thm:nilpotent} is in the setting of the very general world of varieties of groups; as nilpotent groups form a variety, the results we prove are immediately applicable to nilpotent groups.
We refer the reader to H. Neumann's classic text for background and definitions on varieties of groups \cite{Neumann1967varieties}, but a reader interested in just nilpotent groups can simply take $\mathfrak{A}$ in the following to be the class of nilpotent groups.

\p{Varieties of recursively/finitely presented groups}
The PCP is only defined for finitely generated recursively presented groups, while non-trivial varieties necessarily contain non-finitely generated groups.
The following definitions are therefore needed to discuss the PCP for varieties of groups.
By the \emph{weak variety of recursively (finitely) presented groups} $\mathfrak{A}$ we mean all the recursively (finitely) presented groups in a given variety $\mathfrak{B}$; we further call $\mathfrak{A}$ a \emph{variety of recursively (finitely) presented groups}, i.e. we drop the ``weak'', if for all $|\Sigma|<\infty$ the free objects $F_{\mathfrak{B}}(\Sigma)$ are contained in $\mathfrak{A}$.
In this case, we associate both the laws of $\mathfrak{A}$ and $\mathfrak{B}$, i.e. $\mathfrak{w_A}:=\mathfrak{w_B}$, and the free objects of $\mathfrak{A}$ and $\mathfrak{B}$,
i.e. $F_{\mathfrak{A}}(\Sigma):=F_{\mathfrak{B}}(\Sigma)$.

We consider {virtually $\mathfrak{A}$} groups, for $\mathfrak{A}$ a variety of recursively presented groups, and prove that if we can solve two specific algorithmic problems for $\mathfrak{A}$ groups then we can solve the PCP for virtually $\mathfrak{A}$ groups.
In particular, we can solve these problems in nilpotent groups, so Theorem \ref{thm:nilpotent} follows.

Like in Section \ref{sec:virtually}, when considering a virtually $\mathfrak{A}$ group $\Gamma$ as an input to an algorithm, we shall take a finite generating set $\Delta_K\subseteq\langle\Delta\rangle$ for the finite index subgroup $K\in\mathfrak{A}$ as part of this input.

  For $\mathfrak{A}$ a variety of recursively presented groups, the \emph{PCP for $\mathfrak{A}$}, written $\mathfrak{A}$-PCP, is the PCP restricted to those instances $I=(\Sigma, \Gamma, g, h)$ where the group $\Gamma$ is in $\mathfrak{A}$.
We define the non-homogeneous PCP (see Section \ref{sec:virtually}) \emph{NPCP for $\mathfrak{A}$}, written $\mathfrak{A}$-NPCP, analogously.

\subsection{The NPCP for varieties of groups}
In this section we give certain conditions which imply that the NPCP is decidable for a
variety of recursively presented groups.

Our conditions which imply that the $\mathfrak{A}$-NPCP is decidable are as follows.
These conditions do not hold for the varieties of all recursively or finitely presented groups, thus we may assume the word ``proper'' in the statement.
Note also that for any instance $I$ of the $\mathfrak{A}$-PCP, the subgroup $\langle\mathfrak{w}_{\mathfrak{A}}\rangle\leq F(\Sigma)$ generated by the laws of $\mathfrak{A}$ is contained in $\eq(g, h)$ and so (\ref{NPCPvarieties:1}) may be viewed as requesting a finite generating set for the quotient $\eq(g, h)/\langle\mathfrak{w}_{\mathfrak{A}}\rangle$.

\begin{proposition}
\label{thm:NPCPvarieties}
Let $\mathfrak{A}$ be a (proper) variety of recursively or finitely presented groups.
Suppose the following hold.
\begin{enumerate}
\item\label{NPCPvarieties:1} There exists an algorithm with input an instance $I=(\Sigma, K, g, h)$ of the $\mathfrak{A}$-PCP, and output a finite set $S\subset F(\Sigma)$ such that $\eq(g, h)=\langle S,\mathfrak{w}_{\mathfrak{A}} \rangle$.
\item\label{NPCPvarieties:2} There exists an algorithm with input a group $K\in\mathfrak{A}$, a pair of finitely generated subgroups $A, B<K$, and an element $x\in K$, and which determines if $x$ is contained in the product of the subgroups $A$ and $B$, i.e. if $x\in AB$.
\end{enumerate}
Then the $\mathfrak{A}$-NPCP is decidable.
\end{proposition}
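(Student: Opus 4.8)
The plan is to reduce an arbitrary $\mathfrak{A}$-NPCP instance to a single product-membership query of the kind supplied by (\ref{NPCPvarieties:2}), invoking (\ref{NPCPvarieties:1}) only to resolve non-triviality. First I would normalise the instance: given $I=(\Sigma,K,g,h,u_1,u_2,v_1,v_2)$, put $u=v_1^{-1}u_1$ and $v=v_2u_2^{-1}$, so that the defining equation becomes $ug(x)=h(x)v$; replacing $g$ by the homomorphism $x\mapsto ug(x)u^{-1}$ (which I relabel $g$) and setting $c=vu^{-1}\in K$ turns this into $g(x)=h(x)c$, where $(\Sigma,K,g,h)$ is still an honest $\mathfrak{A}$-PCP instance and $c$ a constant. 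Let $T=\{x\in F(\Sigma)\mid g(x)=h(x)c\}$ be the full solution set. A short computation shows that for $x_0,x_1\in T$ one has $g(x_1^{-1}x_0)=c^{-1}h(x_1^{-1}x_0)c$, so $x_1^{-1}x_0\in E:=\eq(\hat g,h)$ where $\hat g(x)=cg(x)c^{-1}$, and conversely $x_1E\subseteq T$; hence $T$ is either empty or a single left coset of $E$. Finally $1\in T$ exactly when $c=1$, i.e. when $u_1u_2=v_1v_2$, which is checkable since $K$ has decidable word problem (the $A=B=\{1\}$ case of (\ref{NPCPvarieties:2})).

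The key step is to decide whether $T\neq\emptyset$. Form the homomorphism $\Phi\colon F(\Sigma)\to K\times K$, $\Phi(x)=(g(x),h(x))$, whose image $P=\langle(g(\sigma),h(\sigma))\mid\sigma\in\Sigma\rangle$ is finitely generated, and the diagonal $D=\{(k,k)\mid k\in K\}$, finitely generated by the diagonal images of a generating set of $K$. The equation $g(x)=h(x)c$ says precisely that $\Phi(x)=(a,b)$ with $a=bc$, i.e. $\Phi(x)=(b,b)(c,1)\in D(c,1)$. Hence $T\neq\emptyset$ if and only if $P\cap D(c,1)\neq\emptyset$, equivalently $(c,1)\in DP$. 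Since a variety is closed under direct products, $K\times K\in\mathfrak{A}$ and is again (recursively) presented, so (\ref{NPCPvarieties:2}) applied to the group $K\times K$, the subgroups $A=D$ and $B=P$, and the element $(c,1)$ decides whether $T\neq\emptyset$.

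It remains to upgrade ``$T\neq\emptyset$'' to ``$T$ contains a non-trivial element'', and here I would split on $c$. If $c\neq1$ then $1\notin T$, so every element of $T$ is non-trivial and deciding $T\neq\emptyset$ as above already answers the NPCP. If $c=1$ then $\hat g=g$ and $T=E=\eq(g,h)$ is a subgroup, so a non-trivial solution exists if and only if $E\neq\{1\}$. At this point I would call (\ref{NPCPvarieties:1}) to produce a finite $S$ with $\eq(g,h)=\langle S,\mathfrak{w}_{\mathfrak{A}}\rangle$; then $E\neq\{1\}$ precisely when some element of $S$ is non-trivial in $F(\Sigma)$, or $\langle\mathfrak{w}_{\mathfrak{A}}\rangle\neq\{1\}$ in $F(\Sigma)$.

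I expect the genuinely creative obstacle to be the reduction in the second paragraph: recognising that non-emptiness of the affine solution set $T$ is equivalent to membership of $(c,1)$ in the product $DP$ of the diagonal and the graph of $(g,h)$ inside $K\times K$, which is exactly the input shape of (\ref{NPCPvarieties:2}); everything else is bookkeeping around the (twisted) coset structure of $T$. The one technical wrinkle is the final triviality test. Deciding whether some $s\in S$ is non-trivial is just the word problem in $F(\Sigma)$, while deciding whether $\langle\mathfrak{w}_{\mathfrak{A}}\rangle\neq\{1\}$ in $F(\Sigma)$ is equivalent to asking whether the free group of rank $|\Sigma|$ lies in $\mathfrak{A}$. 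Because $\mathfrak{A}$ is closed under subgroups, the ranks $r$ with $F_r\in\mathfrak{A}$ form an initial segment $\{0,\dots,r_0-1\}$, and properness forces $r_0<\infty$; since ``$F_r\notin\mathfrak{A}$'' is semi-decidable from a recursive enumeration of the laws, $r_0$ is computable and the test reduces to comparing $|\Sigma|$ with $r_0$. For the nilpotent, soluble and periodic varieties one has $r_0\leq2$, so $E$ is automatically non-trivial once $|\Sigma|\geq2$ and the wrinkle disappears.
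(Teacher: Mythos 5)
Your proof is correct, but it takes a genuinely different route from the one in the paper. The paper absorbs the constants $u_1,u_2,v_1,v_2$ into two new letters $\alpha,\omega$ mapping into the verbal product $VP_{\mathfrak{A}}(K,F_{\mathfrak{A}}(\alpha,\omega))$, uses hypothesis (\ref{NPCPvarieties:1}) to compute a finite generating set for the equaliser of the extended maps, and then applies hypothesis (\ref{NPCPvarieties:2}) inside the relatively free group $F_{\mathfrak{A}}(\Sigma\sqcup\{\alpha,\omega\})$ to the product of the conjugated equaliser with $F_{\mathfrak{A}}(\Sigma)$. You instead normalise to $g(x)=h(x)c$ and observe that solvability is exactly the statement $(c,1)\in D\cdot\im(g,h)$ inside $K\times K$, where $D$ is the diagonal --- a Mihailova-style reduction that feeds hypothesis (\ref{NPCPvarieties:2}) the group $K\times K$ rather than a free object of the variety, and which is legitimate since varieties (and recursive/finite presentability) are closed under direct squares. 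Your route is shorter and only invokes hypothesis (\ref{NPCPvarieties:1}) in the degenerate homogeneous case $c=1$, whereas the paper needs both hypotheses on every instance; conversely the paper's construction keeps the ambient group relatively free, which is the form in which the subgroup-product algorithms are most naturally stated for nilpotent groups, though the cited algorithms do apply to arbitrary finitely generated nilpotent groups, so nothing is lost for the intended application.

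One small point deserves care: in the case $c=1$ you must decide whether $\langle S,\mathfrak{w}_{\mathfrak{A}}\rangle$ is trivial in $F(\Sigma)$, and when $|\Sigma|=1$ this requires knowing whether $\mathbb{Z}$ satisfies the laws of $\mathfrak{A}$. Your argument that $r_0$ is \emph{computable} from a recursive enumeration of the laws is not quite right (one can semi-decide $F_r\notin\mathfrak{A}$ but not, in general, its complement); however, since $\mathfrak{A}$ is fixed, this single bit is a constant that may be hard-wired into the algorithm, so the decidability claim stands. It is worth noting that the paper's own proof has the same edge case hiding in the assertion that $\pi_{(\Sigma,\mathfrak{A})}$ is non-injective because $\mathfrak{A}$ is proper, which fails for $|\Sigma|=1$ in, say, the nilpotent variety; your version at least makes the issue explicit.
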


Our proof of \Cref{thm:NPCPvarieties} needs a preliminary setup.

The verbal product $VP_{\mathfrak{A}}(A, B)$ of groups $A, B\in\mathfrak{A}$, as defined by H. Neumann \cite[Definition 18.31]{Neumann1967varieties}, plays the role of a free product in the variety $\mathfrak{A}$.
In particular, the group $VP_{\mathfrak{A}}(A, B)$ is contained in $\mathfrak{A}$ if $A, B\in\mathfrak{A}$, and both $A$ and $B$ embed into this group.
If $\mathfrak{A}$ is instead a variety of recursively/finitely presented groups, then $VP_{\mathfrak{A}}(A, B)$ is contained in $\mathfrak{A}$ if $A, B\in\mathfrak{A}$
as the verbal product is the quotient of the free product $A\ast B$ by (the subgroup generated by) the laws of $\mathfrak{A}$ and so is recursively/finitely presented.
In particular, writing $F_{\mathfrak{A}}(\alpha, \omega)$ for the free group in the variety $\mathfrak{A}$ over the alphabet $\{\alpha, \omega\}$, if $K\in\mathfrak{A}$ then $VP_{\mathfrak{A}}(K, F_{\mathfrak{A}}(\alpha, \omega))\in\mathfrak{A}$.

We start with an instance $I_{\NPCP}=(\Sigma, K, g, h, u_1, u_2, v_1, v_2)$ of the $\mathfrak{A}$-NPCP and consider the instance
\[
I_{\PCP}=(\Sigma\sqcup\{\alpha, \omega\}, VP_{\mathfrak{A}}(K, F_{\mathfrak{A}}(\alpha, \omega)), g', h')
\]
of the $\mathfrak{A}$-PCP, where $g'$ and $h'$ are defined as follows.
\begin{align*}
g'(z):=
\begin{cases}
g(z)&\textrm{if} \ z\in\Sigma\\
\alpha u_1&\textrm{if} \ z=\alpha\\
u_2\omega&\textrm{if} \ z=\omega
\end{cases}
&&
h'(z):=
\begin{cases}
h(z)&\textrm{if} \ z\in\Sigma\\
\alpha v_1&\textrm{if} \ z=\alpha\\
v_2\omega&\textrm{if} \ z=\omega
\end{cases}
\end{align*}

We now connect the solutions of $I_{\NPCP}$ to those of $I_{\PCP}$.

\begin{lemma}
\label{lem:solnforNPCP}
A word $y\in F(\Sigma)\setminus\{1\}$ is a solution to $I_{\NPCP}$ if and only if the word $\alpha y\omega$ is a solution to $I_{\PCP}$.
\end{lemma}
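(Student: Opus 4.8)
The plan is to prove both implications by directly computing the images $g'(\alpha y\omega)$ and $h'(\alpha y\omega)$ and then exploiting two structural features of the verbal product recalled above: that $K$ embeds in $VP_{\mathfrak{A}}(K, F_{\mathfrak{A}}(\alpha, \omega))$, and that there is a retraction of this verbal product onto its factor $F_{\mathfrak{A}}(\alpha, \omega)$ that kills $K$ (such a retraction exists because $F_{\mathfrak{A}}(\alpha, \omega)\in\mathfrak{A}$ satisfies the laws of $\mathfrak{A}$, so the defining verbal relations die under the free-product retraction $K\ast F_{\mathfrak{A}}(\alpha,\omega)\to F_{\mathfrak{A}}(\alpha,\omega)$). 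By the definitions of $g'$ and $h'$,
\[
g'(\alpha y\omega)=\alpha u_1 g(y) u_2 \omega \quad\text{and}\quad h'(\alpha y\omega)=\alpha v_1 h(y) v_2 \omega,
\]
both computed inside $VP_{\mathfrak{A}}(K, F_{\mathfrak{A}}(\alpha, \omega))$, where the middle factors $u_1 g(y) u_2$ and $v_1 h(y) v_2$ lie in the embedded copy of $K$.

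For the forward implication, suppose $y\neq1$ solves $I_{\NPCP}$, so that $u_1 g(y) u_2 = v_1 h(y) v_2$ holds in $K$ and hence in $VP_{\mathfrak{A}}(K, F_{\mathfrak{A}}(\alpha, \omega))$. Substituting this identity into the displayed expressions gives $g'(\alpha y\omega)=h'(\alpha y\omega)$ immediately, so $\alpha y\omega\in\eq(g', h')$. To check $\alpha y\omega\notin\ker(g')\cap\ker(h')$ I would apply the retraction onto $F_{\mathfrak{A}}(\alpha, \omega)$: it sends $g'(\alpha y\omega)$ to $\alpha\omega$, and $\alpha\omega\neq1$ in $F_{\mathfrak{A}}(\alpha, \omega)$ (a further retraction onto the one-generator free object $F_{\mathfrak{A}}(\alpha)$ sends $\alpha\omega\mapsto\alpha\neq1$, using that $\mathfrak{A}$ is nontrivial). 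Thus $g'(\alpha y\omega)\neq1$, so $\alpha y\omega\notin\ker(g')$ and in particular $\alpha y\omega$ is a genuine solution to $I_{\PCP}$.

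For the reverse implication, suppose $\alpha y\omega$ solves $I_{\PCP}$ with $y\neq1$; in particular $\alpha y\omega\in\eq(g', h')$, i.e. $\alpha u_1 g(y) u_2 \omega = \alpha v_1 h(y) v_2 \omega$. Cancelling $\alpha$ on the left and $\omega$ on the right (valid by the group axioms) yields $u_1 g(y) u_2 = v_1 h(y) v_2$ inside $VP_{\mathfrak{A}}(K, F_{\mathfrak{A}}(\alpha, \omega))$. Since both sides lie in the embedded copy of $K$ and the embedding $K\hookrightarrow VP_{\mathfrak{A}}(K, F_{\mathfrak{A}}(\alpha, \omega))$ is injective, this equality already holds in $K$; combined with the standing hypothesis $y\neq1$, this says exactly that $y$ solves $I_{\NPCP}$. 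Note that this direction uses only the equaliser condition, not the kernel condition.

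The main obstacle is conceptual rather than computational: the letter-cancellation is a one-line application of the group axioms, and the substitution is routine, so the whole argument rests on correctly invoking the two structural inputs from the verbal product. The purpose of prefixing $\alpha$ and suffixing $\omega$ is precisely to homogenise the inhomogeneous NPCP identity into an equaliser condition while forcing non-triviality modulo $\ker(g')\cap\ker(h')$; the purpose of using the verbal product rather than ad hoc elements is threefold — to keep the ambient group inside $\mathfrak{A}$ so that $I_{\PCP}$ is a legitimate $\mathfrak{A}$-PCP instance, to supply the retraction witnessing $\alpha\omega\neq1$, and, most importantly, to supply the embedding of $K$ that transfers the recovered equation back from the verbal product to $K$ in the reverse direction. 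I would therefore be careful to appeal only to these recalled properties of $VP_{\mathfrak{A}}$, avoiding any reliance on a finer normal-form theory for verbal products.
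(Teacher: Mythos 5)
Your proof is correct and follows essentially the same route as the paper's: compute $g'(\alpha y\omega)$ and $h'(\alpha y\omega)$, use the retraction onto $F_{\mathfrak{A}}(\alpha,\omega)$ to see that $\alpha y\omega\notin\ker(g')\cap\ker(h')$, and use the embedding of $K$ into the verbal product to transfer the equation back to $K$. The only difference is presentational: the paper writes the argument as a single chain of equivalent identities (leaving the use of the embedding of $K$ implicit), whereas you separate the two implications and make that ingredient explicit, which is if anything slightly more careful.
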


\begin{proof}
Note first that the element $\alpha k\omega$ of $VP_{\mathfrak{A}}(K, F_{\mathfrak{A}}(\alpha, \omega))$ is non-trivial for all $k\in K$.
This is because there is a natural projection $VP_{\mathfrak{A}}(K, F_{\mathfrak{A}}(\alpha, \omega))\twoheadrightarrow F_{\mathfrak{A}}(\alpha, \omega)$ taking $\alpha k\omega$ to $\alpha\omega\neq1$ \cite[Paragraph 18.33]{Neumann1967varieties}.
It follows that $\alpha y\omega\not\in\ker(g')\cap\ker(h')$ for all $y\in F(\Sigma)$, because the images of $\alpha y\omega$ under $g'$ and $h'$ have the required form $\alpha k\omega$, $k\in K$.
Therefore, it is sufficient to prove that $y$ is a solution to $I_{\NPCP}$ if and only if $\alpha y\omega\in\eq(g', h')$.

Starting with $y$ being a solution to $I_{\NPCP}$, we obtain the following sequence of equivalent identities:
\begin{align*}
u_1g(y)u_2&=v_1h(y)v_2\\
\alpha u_1 g(y) u_2\omega&=\alpha v_1 h(y) v_2\omega\\
g'(\alpha)g'(y)g'(\omega)&=h'(\alpha)h'(y)h'(\omega)\\
g'(\alpha y\omega)&=h'(\alpha y\omega)
\end{align*}
Therefore $\alpha y\omega$ is a solution to $I_{\PCP}$, so the claimed equivalence follows.
\end{proof}

We now prove \Cref{thm:NPCPvarieties}.
Recalling that $\langle\mathfrak{w}_{\mathfrak{A}}\rangle$ denotes the minimal normal subgroup of $F(\Sigma)$ such that $F_{\mathfrak{A}}(\Sigma)=F(\Sigma)/\langle\mathfrak{w}_{\mathfrak{A}}\rangle$ is in $\mathfrak{A}$,
we write $\pi_{(\Sigma, \mathfrak{A})}: F(\Sigma)\to F_{\mathfrak{A}}(\Sigma)$ for the associated homomorphism.
For an instance $I=(\Sigma', K', g', h')$ of the $\mathfrak{A}$-PCP, we have $\langle\mathfrak{w}_{\mathfrak{A}}\rangle\leq\ker(g')\cap\ker(h')$ and so there exist maps $g'_{\mathfrak{A}}, h'_{\mathfrak{A}}: F_{\mathfrak{A}}(\Sigma')\to K'$ such that $g'$ and $h'$ decompose as $g'_{\mathfrak{A}}\pi_{(\Sigma', \mathfrak{A})}$ and $h'_{\mathfrak{A}}\pi_{(\Sigma', \mathfrak{A})}$, respectively.
In the following proof, we consider the equaliser $\eq(g'_{\mathfrak{A}}, h'_{\mathfrak{A}})$, which is equal to the quotient $\eq(g', h')/\langle\mathfrak{w}_{\mathfrak{A}}\rangle$.

\begin{proof}[Proof of \Cref{thm:NPCPvarieties}]
Let $I_{\NPCP}=(\Sigma, K, g, h, u_1, u_2, v_1, v_2)$ be a given instance of the $\mathfrak{A}$-NPCP.
The algorithm begins by computing the instance $I_{\PCP}$ as in the preliminary setup.
By construction, $I_{\PCP}$ is an instance of the $\mathfrak{A}$-PCP, and so next apply the algorithm of (\ref{NPCPvarieties:1}) to compute a finite subset $S$ of $F(\Sigma')$, where $\Sigma'=\Sigma\sqcup\{\alpha, \omega\}$, such that $\eq(g', h')=\langle S,\mathfrak{w}_{\mathfrak{A}} \rangle$.
Consider the subgroups $P:=\eq(g'_{\mathfrak{A}}, h'_{\mathfrak{A}})$ and $Q:=F_{\mathfrak{A}}(\Sigma)$ of $F_{\mathfrak{A}}(\Sigma')$, which are given in terms of the explicit finite generating sets $S'=\{s\langle\mathfrak{w}_{\mathfrak{A}}\rangle \mid s\in S\}$ and $\Sigma$.
Finally, writing $P^{\omega}$ for $\omega P\omega^{-1}$, use the algorithm of (\ref{NPCPvarieties:2}) to determine if the element $\omega\alpha$ is contained in the product of subgroups $P^{\omega}Q\subseteq F_{\mathfrak{A}}(\Sigma')$.

The result now follows from the following claim: The element $\alpha\omega\in F_{\mathfrak{A}}(\Sigma')$ is contained in the product $P^{\omega}Q$ if and only if $I_{\NPCP}$ has a solution. To prove the claim, note that $\omega\alpha\in P^{\omega}Q$
if and only if
there exist some $p\in P, q\in Q$ such that $\omega\alpha= {\omega}p\omega^{-1}q$,
or equivalently, $\alpha q^{-1}\omega= p\in \eq(g'_{\mathfrak{A}}, h'_{\mathfrak{A}})$.
By taking $y_{\mathfrak{A}}:=q^{-1}$, this holds if and only if
there exists some $y_{\mathfrak{A}}\in F_{\mathfrak{A}}(\Sigma)$ such that $\alpha y_{\mathfrak{A}}\omega\in\eq(g'_{\mathfrak{A}}, h'_{\mathfrak{A}})$.
By taking $y\in\pi_{(\Sigma, \mathfrak{A})}^{-1}(y_{\mathfrak{A}})$ with $y\neq1$, which we may do as $\pi_{(\Sigma, \mathfrak{A})}$ is non-injective as $\mathfrak{A}$ is proper, we see that this holds if and only if there exists some $y\in F(\Sigma)\setminus\{1\}$ such that $\alpha y\omega\in\eq(g', h')$.
By Lemma \ref{lem:solnforNPCP} such a $y\in F(\Sigma)\setminus\{1\}$ exists if and only if $I_{\NPCP}$ has a solution, as claimed.
\end{proof}


\subsection{The PCP in virtually $\mathfrak{A}$ groups}
\label{sec:virtuallyA}
Applying Proposition \ref{thm:virtually} to Proposition \ref{thm:NPCPvarieties} gives the following.

\begin{theorem}
\label{thm:virtuallyA}
Let $\mathfrak{A}$ be a variety of finitely presented groups.
Suppose (\ref{NPCPvarieties:1}) and (\ref{NPCPvarieties:2}) of Proposition \ref{thm:NPCPvarieties} hold.
Then the PCP is decidable for virtually $\mathfrak{A}$ groups.
\end{theorem}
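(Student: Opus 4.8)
The plan is to derive the result by feeding the output of Proposition \ref{thm:NPCPvarieties} into Proposition \ref{thm:virtually}. Let $\Gamma$ be a virtually $\mathfrak{A}$ group, so $\Gamma$ contains a finite index subgroup $K\in\mathfrak{A}$, a finite generating set for which is supplied as part of the input. Since $\mathfrak{A}$ is a variety of finitely presented groups, $K$ is finitely presented, so Proposition \ref{thm:virtually} applies to $\Gamma$ as soon as we know that both the PCP and the NPCP are decidable for $K$. Thus the whole proof reduces to establishing these two facts for an arbitrary $K\in\mathfrak{A}$, uniformly in $K$.

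The NPCP for $K$ is immediate: conditions (\ref{NPCPvarieties:1}) and (\ref{NPCPvarieties:2}) are assumed to hold, so Proposition \ref{thm:NPCPvarieties} gives that the $\mathfrak{A}$-NPCP is decidable, and any instance of the NPCP for $K$ is in particular an instance of the $\mathfrak{A}$-NPCP since $K\in\mathfrak{A}$. For the PCP for $K$ I would argue directly from condition (\ref{NPCPvarieties:1}). Given an instance $I=(\Sigma, K, g, h)$ of the $\mathfrak{A}$-PCP, the algorithm of (\ref{NPCPvarieties:1}) produces a finite set $S\subset F(\Sigma)$ with $\eq(g, h)=\langle S, \mathfrak{w}_{\mathfrak{A}}\rangle$. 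Because the images of $g$ and $h$ lie in $\mathfrak{A}$, the laws $\mathfrak{w}_{\mathfrak{A}}$ lie in both $\ker(g)$ and $\ker(h)$, so $\langle\mathfrak{w}_{\mathfrak{A}}\rangle\leq\ker(g)\cap\ker(h)$. Consequently the quotient $\eq(g, h)/(\ker(g)\cap\ker(h))$ is generated by the images of the finitely many elements of $S$, and hence is non-trivial precisely when some $s\in S$ fails to lie in $\ker(g)\cap\ker(h)$. This last condition is checkable using the word problem for $K$, which is decidable since the decidability of the NPCP for $K$ yields the conjugacy problem (Lemma \ref{lem:CP}) and a fortiori the word problem. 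Thus the PCP is decidable for $K$.

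With both the PCP and the NPCP decidable for the finitely presented group $K$, Proposition \ref{thm:virtually} gives that the PCP is decidable for virtually $K$ groups, and in particular for $\Gamma$. As the algorithms of Propositions \ref{thm:NPCPvarieties} and \ref{thm:virtually}, together with the extraction of the PCP above, are all uniform in the input group, this assembles into a single decision procedure for the class of virtually $\mathfrak{A}$ groups. The only point requiring care — rather than a genuine obstacle — is that Proposition \ref{thm:NPCPvarieties} supplies only the NPCP, so the decidability of the PCP for $\mathfrak{A}$ groups must be extracted separately from condition (\ref{NPCPvarieties:1}); once this is noted, the theorem is a direct combination of the two propositions.
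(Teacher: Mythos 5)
Your proof is correct and follows essentially the same route as the paper's: use condition (\ref{NPCPvarieties:1}) together with a decidable word problem to solve the $\mathfrak{A}$-PCP, invoke Proposition \ref{thm:NPCPvarieties} for the $\mathfrak{A}$-NPCP, and feed both into Proposition \ref{thm:virtually}. The only cosmetic difference is that the paper obtains the word problem for $K$ directly from condition (\ref{NPCPvarieties:2}) by taking $A$ and $B$ trivial, whereas you route through the NPCP and the conjugacy problem via Lemma \ref{lem:CP}; both are valid and non-circular.
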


\begin{proof}
Note that, by Proposition \ref{thm:NPCPvarieties} (\ref{NPCPvarieties:2}),
groups in $\mathfrak{A}$ have decidable word problem (take $A$ and $B$ to be trivial).

Let $I=(\Sigma, K, g, h)$ be an instance of the $\mathfrak{A}$-PCP.
We start by producing a finite set $S\subset F(\Sigma)$ such that $\langle S,\mathfrak{w}_{\mathfrak{A}} \rangle=\eq(g, h)/\langle\mathfrak{w}_{\mathfrak{A}}\rangle$, which we can do by Proposition \ref{thm:NPCPvarieties} (\ref{NPCPvarieties:1}).
Then $\eq(g, h)/(\ker(g)\cap\ker(h))$ is trivial if and only if $S\subseteq \ker(g)\cap\ker(h)$; since $S$ is finite and $K$ has decidable word problem, we can determine if $S\subseteq \ker(g)\cap\ker(h)$ and
thus solve the $\mathfrak{A}$-PCP.

The assumptions of this theorem match the assumptions of \Cref{thm:NPCPvarieties}, so the $\mathfrak{A}$-NPCP is also decidable.
The result now follows from \Cref{thm:virtually}.
\end{proof}

We can now prove Theorem \ref{thm:nilpotent}.

\begin{proof}[Proof of Theorem \ref{thm:nilpotent}]
Both problems required for Theorem \ref{thm:virtuallyA} are decidable for nilpotent groups \cite[Theorem 5.7]{Myasnikov2014Post} \cite[Algorithms 6.2 and 6.1]{Lo1998finding}, so the result follows from Theorem \ref{thm:virtuallyA}, along with
the fact that finitely generated nilpotent groups are finitely presented \cite[Theorem 3.4]{baumslag_etal_polycyclic_algorithms}.
\end{proof}

\begin{remark}
It can be noted that the algorithm in Theorem~\ref{thm:virtuallyA} is uniform,
in the sense that there is one algorithm that works for all virtually $\mathfrak{A}$ groups,
modulo uniform solutions to the two problems in Proposition \ref{thm:NPCPvarieties} for $\mathfrak{A}$.
These problems are uniformly decidable for finitely generated nilpotent groups, and so
Theorem \ref{thm:nilpotent} gives solution to the uniform PCP for finitely generated nilpotent groups.
\end{remark}


\section{The verbal PCP}
\label{sec:verbal}

In this section we consider the version of the PCP defined by
Myasnikov, Nikolaev and Ushakov \cite{Myasnikov2014Post}, which we call the
\emph{verbal PCP} because it looks for solutions outside a verbal subgroup, as explained below.
To differentiate, recall that we refer to the prevalent
version of PCP in this paper (as defined in Section
\ref{sec:prelim}) as the \emph{kernel-based PCP}.

Recall from Section \ref{sec:nilpotent} that the \emph{values} of a word $w=w(X_1, \dots, X_k)\in F(\Sigma_{\infty})$ in a group $G$ is the set $G_w=\{w(g_1, \dots, g_k) \mid g_i \in G\}$.
A \emph{verbal subgroup} of $G$ is a subgroup generated by the values of a set of words $\mathfrak{w}\subset F(\Sigma_{\infty})$.
As the set $\mathfrak{w}$ is typically given, we write $\mathfrak{w}(G)$ for this subgroup, so $\mathfrak{w}(G)=\langle G_w\mid w\in\mathfrak{w} \rangle$.
Recalling that a word $w\in F(\Sigma_{\infty})$ is a law in $\Gamma$ if $\Gamma_w=\{1\}$,
we are particularly interested in the verbal subgroup $\mathfrak{w}_{\Gamma}(F(\Sigma))$, shortened to $\langle\mathfrak{w}_{\Gamma}\rangle$ when $F(\Sigma)$ is implicit, where $\mathfrak{w}_{\Gamma}$ is the set of laws of $\Gamma$.
Equivalently, $\langle\mathfrak{w}_{\Gamma}\rangle$ is the maximal normal subgroup of $F(\Sigma)$ such that every homomorphism $F(\Sigma)\to\Gamma$ factors through $F(\Sigma)/\langle\mathfrak{w}_{\Gamma}\rangle$.

The definition of the verbal PCP we give now is equivalent to  Myasnikov, Nikolaev and Ushakov's definition, but has been rephrased to mirror the definition of the kernel-based PCP:
An \emph{instance} of the verbal PCP is an instance of the kernel-based PCP, so a four-tuple $I=(\Sigma, \Gamma, g, h)$ with $g, h\colon F(\Sigma)\rightarrow \Gamma$.
The \emph{verbal PCP} itself is the decision problem: \\

\begin{center}
Given $I=(\Sigma, \Gamma, g, h)$, is the group $\eq(g, h)/\langle\mathfrak{w}_{\Gamma}\rangle$ trivial?
\end{center}

\medskip

Note that $\langle\mathfrak{w}_{\Gamma}\rangle\leq \ker(g)\cap\ker(h)$, so for a fixed instance $I$, the verbal PCP may have solutions when the kernel-based PCP does not.

\p{Hyperbolic groups}
Suppose $\Gamma$ is non-elementary hyperbolic.
Then $\Gamma$ contains a non-abelian free group, so $\langle\mathfrak{w}_{\Gamma}\rangle$ is trivial.
Hence, the verbal PCP for non-elementary hyperbolic groups is simply asking if the equaliser $\eq(g, h)$ is trivial.
This compares with the kernel-based PCP as follows:
\begin{enumerate}
\item\label{hyperbolic:1}
If either $g$ or $h$ is injective then $\ker(g)\cap\ker(h)=\langle\mathfrak{w}_{\Gamma}\rangle=\{1\}$.
Hence, the verbal PCP and the kernel-based PCP ask the same question and so have identical solution sets.

Decidability when $g$ or $h$ is injective is unknown.
\item
If both $g$ and $h$ are non-injective then $\ker(g)\cap\ker(h)$ is non-trivial (as it contains the non-trivial subgroup $[\ker(g), \ker(h)]$).
Hence, the verbal PCP necessarily has a solution, and so is trivially decidable.
However, by \Cref{thm:hyperbolic}, the kernel-based PCP is undecidable.
\end{enumerate}

\p{Nilpotent groups}
As noted in Section \ref{sec:nilpotent}, Myasnikov, Nikolaev and Ushakov
\cite{Myasnikov2014Post} proved the kernel-based PCP for nilpotent groups, rather than the verbal PCP as their theorem incorrectly states.
We now rectify this situation and prove that the verbal PCP is decidable for torsion-free nilpotent groups.

We need the following background on torsion-free nilpotent groups (see \cite{Nickel, Cant-Eick}).
Let $\Gamma$ be a finitely generated torsion-free nilpotent group. Then $\Gamma$ has a central series
\[
\Gamma=\Gamma_1 > \dots > \Gamma_{n+1}=\{1\}
\]
with $\Gamma_i/\Gamma_{i+1}$ infinite cyclic for every $1\leq i \leq n$, and one can choose $a_1, \dots, a_n \in \Gamma$ such that $\Gamma_{i-1}=\langle a_i, \Gamma_i\rangle$. Such a sequence is called a \emph{nilpotent generating sequence} for $\Gamma$, and gives rise to a \emph{nilpotent presentation}
\begin{equation}\label{eq:nilpotent}
\langle a_1, \dots, a_n \mid [a_i,a_j]=a^{c_{i,j,j+1}}_{j+1}\dots a^{c_{i,j,n}}_n, 1\leq i < j\leq n \rangle,
\end{equation}
where $c_{i,j,k} \in \mathbb{Z}$. This presentation is called \emph{consistent} in the literature (see \cite[Chapter 8]{Holt2005Handbook}) and in particular every element in $\Gamma$ can be written \emph{uniquely} as $a_1^{x_1} \dots a_n^{x_n}$, $x_i \in \mathbb{Z}$. If we write $x= (x_1, \dots, x_n)$ and $y=(y_1, \dots, y_n)$, the multiplication of two elements can be expressed as
\begin{equation}\label{eq:mult}
(a_1^{x_1} \dots a_n^{x_n})(a_1^{y_1} \dots a_n^{y_n})=a_1^{\delta_1(x, y)} \dots a_n^{\delta_n(x, y)},
\end{equation}
where by a classical result of Hall the functions $\delta_i\colon\mathbb{Z}^n\oplus \mathbb{Z}^n \mapsto \mathbb{Z}$ are rational polynomials depending on the group only, and not on the elements involved. The polynomials $\delta_i$ on $2n$ variables are called \emph{Hall polynomials} in the literature and can easily be extended to capture the multiplication of a fixed number (not just two) of elements.

Recall that we denote by $\langle\mathfrak{w}_{\Gamma}\rangle$ the verbal subgroup of $F(\Sigma)$ generated by the set $\mathfrak{w}_{\Gamma}$ of laws of $\Gamma$.

\begin{lemma}
	\label{lem:CheckLaw}
Let $F(\Sigma)$ be a free group as above, let $w\in F(\Sigma)$, and let $\Gamma$ be a finitely generated, torsion-free, nilpotent group. Then there is an algorithm that can determine whether $w\in \langle\mathfrak{w}_{\Gamma}\rangle$, that is, whether $w$ is a law of $\Gamma$.
\end{lemma}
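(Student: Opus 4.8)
The plan is to reduce the question ``is $w$ a law of $\Gamma$?'' to the question ``are finitely many explicit integer-valued polynomials identically zero?'', which is decidable by inspecting coefficients. Write the free generators of $F(\Sigma)$ occurring in $w$ as $X_1, \dots, X_k$, so that $w = w(X_1, \dots, X_k)$, and recall that $w$ is a law of $\Gamma$ (equivalently, $w\in\langle\mathfrak{w}_{\Gamma}\rangle$) precisely when $w(g_1, \dots, g_k) = 1$ for every choice of $g_1, \dots, g_k \in \Gamma$.

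First I would replace each $g_i$ by a \emph{generic} element, using the normal form coming from the consistent nilpotent presentation \eqref{eq:nilpotent}: write $g_i = a_1^{x_{i,1}} \cdots a_n^{x_{i,n}}$ with indeterminates $x_{i,1}, \dots, x_{i,n}$ ranging over $\mathbb{Z}$. By the classical result of Hall (the polynomials $\delta_j$ of \eqref{eq:mult}, together with the analogous polynomials describing inversion), the coordinates of a product of finitely many normal-form elements are rational polynomials in the coordinates of the factors. Since $w$ is a fixed word, evaluating $w(g_1, \dots, g_k)$ amounts to composing $|w|$ such polynomial maps, and a composition of polynomials is again polynomial. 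This yields
\[
w(g_1, \dots, g_k) = a_1^{P_1} \cdots a_n^{P_n},
\]
where each $P_j = P_j(x_{1,1}, \dots, x_{k,n})$ is a rational polynomial in the $kn$ variables taking integer values on $\mathbb{Z}^{kn}$. Crucially, the $\delta_j$ and the inversion polynomials are computable from the presentation, and hence so are the $P_j$.

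It remains to decide whether each $P_j$ vanishes for all integer inputs. The key observation is that a polynomial over $\mathbb{Q}$ vanishes on all of $\mathbb{Z}^{kn}$ if and only if it is the zero polynomial, since a nonzero polynomial in finitely many variables over the infinite field $\mathbb{Q}$ cannot vanish on the infinite grid $\mathbb{Z}^{kn}$. Therefore $w$ is a law of $\Gamma$ if and only if every $P_j$ is the zero polynomial, which we check by computing the $P_j$ explicitly and testing whether all their coefficients vanish.

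The main obstacle is the bookkeeping of the middle step: one must track how the normal-form coordinates transform under each multiplication and inversion, and verify that the resulting $P_j$ are genuinely computable, which rests on the effectivity of the Hall polynomials $\delta_j$ furnished by a consistent presentation. Alternatively, one can avoid computing the $P_j$ altogether by bounding their degree $D$ in terms of the nilpotency class of $\Gamma$ and $|w|$, and then testing $w(g_1, \dots, g_k) = 1$ via the (decidable) word problem in $\Gamma$ at all points of $\{0, 1, \dots, D\}^{kn}$, since a polynomial of degree at most $D$ is identically zero if and only if it vanishes on this grid.
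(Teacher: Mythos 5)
Your proposal is correct and follows essentially the same route as the paper: substitute generic normal-form elements for the variables, use the (computable) Hall polynomials of the consistent nilpotent presentation to express the coordinates of $w(g_1,\dots,g_k)$ as explicit rational polynomials $P_1,\dots,P_n$, and then use uniqueness of normal forms to reduce ``$w$ is a law'' to ``each $P_j$ is the zero polynomial'', checked on coefficients. The only additions are your explicit remark that vanishing on $\mathbb{Z}^{kn}$ forces the zero polynomial and the alternative finite-grid test, neither of which changes the argument.
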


\begin{proof}
Suppose $\Sigma=\{X_1, \dots, X_k\}$, with $k\geq 1$, and write
\[
w=w(X_1, \dots, X_k)=X_{i_1}^{E_1} \dots X_{i_{\ell}}^{E_{\ell}},
\]
where $ i_j \in \{1, \dots, {\ell}\}$ for $1\leq j \leq {\ell}$, and $E_j \in \mathbb{Z}$. Let $\{a_1, \dots, a_n\}$ be the nilpotent generating sequence of $\Gamma$ as in (\ref{eq:nilpotent}). Then, writing $w(\Gamma)$ for the minimal verbal subgroup of $\Gamma$ containing $w$, we have
\[
w(\Gamma)=\{(a_1^{e_{1,1}} \dots a_n^{e_{1,n}})^{E_1} \dots (a_1^{e_{{\ell},1}} \dots a_n^{e_{{\ell},n}})^{E_{\ell}} \mid e_{i,j} \in \mathbb{Z}\}.
\]
Since the Hall polynomials can be extended to capture the multiplication of any fixed number of elements, and the $E_i$ are fixed, there exist rational polynomials $P_1, \dots, P_n\colon \mathbb{Z}^{n{\ell}} \mapsto \mathbb{Z}$ to give the exponents of the nilpotent generating sequence. Writing $e_j=(e_{j,1}, \dots, e_{j,n})$, one gets
\[
w(\Gamma)=\{a_1^{P_1(e_1, \dots, e_n)} \dots a_n^{P_n(e_1, \dots, e_n)} \mid e_j \in \mathbb{Z}^n\}.
\]
Since the Hall polynomials for the product of two elements can be explicitly, algorithmically, computed, the polynomials $P_i$ for the multiplication within $w(\Gamma)$ can also be explicitly determined by induction.

To see that $w$ is a law of $\Gamma$ it suffices to check that $w(\Gamma)=\{1\}$. As expressing group elements in $\Gamma$ over the nilpotent generators $a_i$ is done uniquely, $w(\Gamma)=\{1\}$ if and only if the $P_i$ are all equal to the zero polynomial. This is easily checked as the Hall polynomials for $w(\Gamma)$ were explicitly computed.
\end{proof}

This allows us to solve the verbal PCP for torsion-free nilpotent groups.

\begin{theorem}
\label{thm:correction}
The verbal PCP is decidable for torsion-free nilpotent groups.
\end{theorem}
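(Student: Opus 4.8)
The plan is to decide the single containment $\eq(g, h) \subseteq \langle\mathfrak{w}_{\Gamma}\rangle$ by reducing it to finitely many applications of Lemma \ref{lem:CheckLaw}. Since $\langle\mathfrak{w}_{\Gamma}\rangle \leq \ker(g)\cap\ker(h) \leq \eq(g, h)$, the group $\eq(g, h)/\langle\mathfrak{w}_{\Gamma}\rangle$ is trivial precisely when $\eq(g, h)\subseteq\langle\mathfrak{w}_{\Gamma}\rangle$, that is, when every element of the equaliser is a law of $\Gamma$. So the entire problem is to decide this containment, and the key idea is to separate the generic nilpotency laws (which let me describe the equaliser by existing machinery) from the laws specific to $\Gamma$ (which I handle via Lemma \ref{lem:CheckLaw}).

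First I would compute the nilpotency class $c$ of $\Gamma$ from its consistent nilpotent presentation, and let $\mathfrak{A}$ be the (proper, finitely presented) variety of nilpotent groups of class at most $c$, so that $\Gamma\in\mathfrak{A}$. Then the verbal subgroup $\langle\mathfrak{w}_{\mathfrak{A}}\rangle$ of $F(\Sigma)$ is exactly $\gamma_{c+1}(F(\Sigma))$. Crucially, since $\gamma_{c+1}(\Gamma)=\{1\}$, every $(c+1)$-fold commutator is a law of $\Gamma$, and hence $\langle\mathfrak{w}_{\mathfrak{A}}\rangle=\gamma_{c+1}(F(\Sigma))\subseteq\langle\mathfrak{w}_{\Gamma}\rangle$. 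This is the compatibility that makes the two layers of laws fit together.

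Next I would apply the algorithm of Proposition \ref{thm:NPCPvarieties}(\ref{NPCPvarieties:1}), which holds for $\mathfrak{A}$ (this is precisely the ingredient established in the proof of Theorem \ref{thm:nilpotent}), to the instance $(\Sigma, \Gamma, g, h)$. This yields a finite set $S\subset F(\Sigma)$ with $\eq(g, h)=\langle S,\mathfrak{w}_{\mathfrak{A}}\rangle=\langle S\rangle\cdot\gamma_{c+1}(F(\Sigma))$. Because $\gamma_{c+1}(F(\Sigma))\subseteq\langle\mathfrak{w}_{\Gamma}\rangle$ and $\langle\mathfrak{w}_{\Gamma}\rangle$ is a subgroup, the containment $\eq(g, h)\subseteq\langle\mathfrak{w}_{\Gamma}\rangle$ collapses to $S\subseteq\langle\mathfrak{w}_{\Gamma}\rangle$. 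Finally, for each of the finitely many $s\in S$ I would use Lemma \ref{lem:CheckLaw} to decide whether $s$ is a law of $\Gamma$, and conclude that the instance has trivial quotient (no verbal solution) if and only if every $s\in S$ is a law.

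The overall structure mirrors the proof of Theorem \ref{thm:virtuallyA}, with the test ``$S\subseteq\ker(g)\cap\ker(h)$'' (decided there by the word problem) replaced by the test ``$S\subseteq\langle\mathfrak{w}_{\Gamma}\rangle$''. I expect the main obstacle to be exactly this law-membership test: deciding whether a given word is a law of the specific group $\Gamma$, rather than merely of its variety. That is supplied by Lemma \ref{lem:CheckLaw} through the Hall polynomial computation, so once that lemma is in hand the remaining steps are bookkeeping, since Proposition \ref{thm:NPCPvarieties}(\ref{NPCPvarieties:1}) already furnishes a finite generating set for the equaliser modulo the nilpotency laws.
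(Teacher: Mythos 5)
Your proposal is correct and follows essentially the same route as the paper: the paper likewise obtains a finite set $S$ with $\eq(g,h)=\langle S,\gamma_{c+1}(F(\Sigma))\rangle$ (citing Myasnikov--Nikolaev--Ushakov directly, which is the same ingredient you access via Proposition \ref{thm:NPCPvarieties}(\ref{NPCPvarieties:1})), and then reduces triviality of $\eq(g,h)/\langle\mathfrak{w}_{\Gamma}\rangle$ to checking $S\subseteq\langle\mathfrak{w}_{\Gamma}\rangle$ with Lemma \ref{lem:CheckLaw}. Your explicit observation that $\gamma_{c+1}(F(\Sigma))\subseteq\langle\mathfrak{w}_{\Gamma}\rangle$ is exactly the compatibility the paper uses implicitly.
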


\begin{proof}
Let $I=(\Sigma, \Gamma, g, h)$ be an instance of the verbal PCP, let $\Gamma$ torsion-free nilpotent of class $c$ (that is, $\gamma_{c+1}(\Gamma)=\{1\}$), and let $\gamma_i(F(\Sigma))$ be the groups in the lower central series of $F(\Sigma)$.

By \cite[Theorem 5.7 (1)]{Myasnikov2014Post} the equaliser $\eq(g, h)$ contains $\gamma_{c+1}(F(\Sigma))$ (this can be seen by noting that $\gamma_{c+1}(F(\Sigma)) \leq \ker(g) \cap \ker(h) \leq \eq(g, h)$) and is finitely generated modulo $\gamma_{c+1}(F(\Sigma))$. That is, there exists a finite set $S\subset F(\Sigma)$ such that $\langle S,\gamma_{c+1}(F(\Sigma)) \rangle=\eq(g, h)$.
Moreover, by \cite[Theorem 5.7 (2)]{Myasnikov2014Post} one can algorithmically find $S\subset F(\Sigma)$.

Then $\eq(g, h)/\langle\mathfrak{w}_{\Gamma}\rangle$ is trivial if and only if $S\subseteq \langle\mathfrak{w}_{\Gamma}\rangle$.
Given any $x\in F(\Sigma)$, we can check if $x\in \langle\mathfrak{w}_{\Gamma}\rangle$ by Lemma \ref{lem:CheckLaw}.
Hence, as $S$ is finite, we can determine if $S\subseteq \langle\mathfrak{w}_{\Gamma}\rangle$.
Therefore, we can solve the verbal PCP for $\Gamma$.
\end{proof}

Extending \Cref{thm:correction} to general nilpotent groups would require an extension of \Cref{lem:CheckLaw} to general nilpotent groups, which seems possible.
However, extending \Cref{thm:correction} to virtually nilpotent groups, as in \Cref{thm:nilpotent}, seems difficult, essentially because the varieties change with the groups involved.
For example, in the proof of \Cref{thm:virtually}, given $I=(\Sigma, \Gamma, g, h)$ we use the finite index subgroup $K<\Gamma$ to construct a new instance $I'$ of the kernel-based PCP such that ``it's solutions correspond precisely to the solutions to $I$ which are contained in $N_K$'', $N_K$ a finite index subgroup of $F(\Sigma)$.
This holds because $N_K/(\ker(g)\cap\ker(h))$ embeds into $F(\Sigma)/(\ker(g)\cap\ker(h))$.
For the verbal PCP we would require $N_K/\mathfrak{w}_{K}(N_K)$ to embed into $F(\Sigma)/\mathfrak{w}_{\Gamma}(F(\Sigma))$, which does not happen in general.


\bibliographystyle{amsalpha}
\bibliography{BibTexBibliography}

\end{document}